\newtheorem{theorem}{Theorem}[section]
\newtheorem{exttheorem}{Theorem}
\theoremstyle{plain}
\newtheorem{claim}{Claim}
\newtheorem*{claim*}{Claim}
\newtheorem{corollary}[theorem]{Corollary}
\newtheorem{lemma}[theorem]{Lemma}
\newtheorem*{question}{Question}
\newtheorem{proposition}[theorem]{Proposition}
\newtheorem{remark}[theorem]{Remark}
\newcommand\R{{\mathbb R}}
\newcommand\N{{\mathbb N}}
\newcommand\C{{\mathbb C}}
\newcommand\D{{\mathbb D}}
\newcommand\RS{{\hat {\mathbb C}}}
\DeclareMathOperator*{\inter}{int}
\begin{document}
\title[Non-self maps of regions in the plane]{Fixed Point Theorem for Non-Self Maps of Regions in the Plane}
\author{Georg Ostrovski}
\address{Mathematics Department, University of Warwick, CV4 7AL Coventry, United Kingdom}
\email{g.ostrovski@gmail.com}
\keywords{Fixed Point Theorems; Periodic Points; Brouwer Homeomorphisms; Plane Homeomorphisms; Brouwer Translation; Non-Self Maps; Period Forcing}
\subjclass[2000]{54H20, 58F08, 37B45, 37E30, 54H25}

\begin{abstract}
Let $X \subset \R^2$ be a compact, simply connected, locally connected set, and let 
$f \colon X \to Y \subset \R^2$ be a homeomorphism isotopic to the identity on $X$. 
Generalizing Brouwer's plane translation theorem for self-maps of the plane, 
we prove that $f$ has no recurrent (in particular, no periodic) points, if it has no fixed points. 
\end{abstract}

\maketitle

\section{Introduction and Statement of the Result}

Since Brouwer's proof of his plane translation theorem \cite{Brouwer1912}, 
many alternative proofs of the theorem and its key ingredient, the translation arc lemma, have been given 
(for several more recent ones, see Brown \cite{Brown1984}, 
Fathi \cite{Fathi1987}, Franks \cite{Franks2008}). The following is a 
concise formulation of its main statement.

\begin{exttheorem}[Barge and Franks \cite{Barge1993}]\label{thm:franks}
Suppose $f\colon \R^2\to \R^2$ is an orientation-preserving homeomorphism of the plane.
If $f$ has a periodic point then it has a fixed point.
\end{exttheorem}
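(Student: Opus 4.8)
The plan is to establish the logically equivalent statement that an orientation-preserving homeomorphism $f\colon\R^2\to\R^2$ with no fixed point has no periodic point; this is the content of the Brouwer plane translation theorem, whose combinatorial heart is the \emph{translation arc lemma}. If $f$ already fixes a point there is nothing to prove, so I would assume $f$ is fixed-point-free, and then argue that it cannot have a periodic point at all: suppose for contradiction that some $x$ has minimal period $p\ge 2$.

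Recall that a \emph{translation arc} at $x$ is an arc $\alpha$ with endpoints $x$ and $f(x)$ satisfying $\alpha\cap f(\alpha)=\{f(x)\}$. The first step is the existence of such an arc through $x$: since $f(x)\ne x$ and $f$ has no fixed point, a general-position argument (carried out in each of the cited references) modifies an arbitrary arc from $x$ to $f(x)$ into one meeting its own $f$-image only at the shared endpoint $f(x)$; it is convenient to first shrink a closed disk neighbourhood $D$ of $x$ until $D\cap f(D)=\emptyset$, so that the local picture is controlled. The second step is the translation arc lemma itself: for any translation arc $\alpha$ one has $f^{\,j}(\alpha)\cap\alpha=\emptyset$ for every $j\ge 2$. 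Granting the lemma, the theorem follows at once, since $x\in\alpha$ and $f^{p}(x)=x$ give $x\in f^{p}(\alpha)\cap\alpha\ne\emptyset$ with $p\ge 2$, a contradiction.

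The substance is the proof of the translation arc lemma, and this is where I expect essentially all the work to lie. Let $j\ge 2$ be minimal with $f^{\,j}(\alpha)\cap\alpha\ne\emptyset$. By minimality the arcs $\alpha,f(\alpha),\dots,f^{\,j}(\alpha)$ are pairwise disjoint apart from consecutive ones meeting in a single endpoint, so from an initial sub-arc of $\alpha$ together with sub-arcs of the iterates one can splice a simple closed curve $J$ bounding a closed topological disk $\Delta$. I would then study the displacement map $z\mapsto f(z)-z$ along $J$: using that $f$ is orientation-preserving and fixed-point-free, a Poincar\'e index / winding-number computation shows the index of this vector field around $J$ is nonzero, which by the classical index-forces-a-fixed-point argument yields a fixed point of $f$ inside $\Delta$ --- contradicting fixed-point-freeness. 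If the bookkeeping of sub-arcs and orientations proves awkward, the equivalent route via Franks' \emph{free disk chains} is available: one shows that a fixed-point-free orientation-preserving $f$ admits no cyclic chain of free open disks $U_0,\dots,U_{k-1}$ (each $U_i\cap f(U_i)=\emptyset$ and each $f^{\,n_i}(U_i)\cap U_{i+1}\ne\emptyset$ with $n_i\ge 1$, indices mod $k$), and then notes that a single small free disk $U$ around $x$ with $f^{\,p}(U)\cap U\ne\emptyset$ is such a chain of length one.

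The main obstacle is precisely this last argument: converting the abstract ``first return'' $f^{\,j}(\alpha)\cap\alpha\ne\emptyset$ into a bona fide Jordan curve, keeping track of orientations and of exactly which sub-arcs of which iterates bound the disk $\Delta$, and verifying that the index computation returns a nonzero value. One also has to check that the general-position surgery used to build $\alpha$ in the first place genuinely produces a translation arc. By contrast the surrounding steps --- the reduction to the fixed-point-free case and the deduction of aperiodicity from the disjointness of the iterates of a translation arc --- are routine.
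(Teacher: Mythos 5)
The paper does not prove this statement at all: it is quoted as an external result (Barge and Franks, building on Brouwer's plane translation theorem), so there is no internal proof to compare against. Your outline is the standard classical argument from the cited sources --- existence of a translation arc at a non-fixed point, Brouwer's translation arc lemma $f^{\,j}(\alpha)\cap\alpha=\emptyset$ for $j\ge 2$ proved by splicing a Jordan curve and an index computation --- and it is sound in structure, with the genuinely hard step (the index argument for the spliced curve) correctly identified rather than glossed over. The alternative route you mention via free disk chains is in fact the one closest to Barge--Franks and to how this paper itself later invokes Franks' freeness result in the proof of Corollary~\ref{cor:jordan_general_rec}.
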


Slightly stronger versions assume a weaker form of recurrence, 
for example the existence of periodic disk chains 
(Barge and Franks \cite{Barge1993}), to obtain the existence of fixed points.

The point of this is paper is to show that an analogue of Theorem \ref{thm:franks}
also holds for a homeomorphism which is merely defined  on a compact
subset of a surface. This situation could arise when considering a restriction
of a self-map of a surface. Our main theorem is the following: 

\begin{theorem}\label{thm:main_thm}
 Let $X \subset \R^2$ be a compact, simply connected, locally connected subset of the real plane
and let $f \colon X \to Y \subset \R^2$ be a homeomorphism isotopic to the identity on $X$. 
Let $C$ be a connected component of $X \cap Y$. If $f$ has a periodic orbit in $C$, 
then $f$ also has a fixed point in $C$.  
\end{theorem}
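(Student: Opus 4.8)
The plan is to deduce Theorem~\ref{thm:main_thm} from Theorem~\ref{thm:franks}: I would extend $f$ to an orientation-preserving homeomorphism of $\R^2$, cut the dynamics down to a disk around the given periodic orbit, and then modify the extension into a \emph{fixed-point-free} orientation-preserving homeomorphism of $\R^2$ that still carries that orbit --- which Theorem~\ref{thm:franks} rules out. The whole argument is by contradiction: if the orbit in $C$ has period $1$ it is a fixed point and there is nothing to prove, so assume it has period $n\ge 2$ and that $f$ has no fixed point in $C$.

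\emph{Step 1: extending $f$ to the plane.} I would first record that a compact, simply connected, locally connected set $X\subset\R^2$ is cellular in $\RS$: simple connectivity (via Alexander duality, or general facts on Peano continua) forces $X$ not to separate $\RS$, so $\RS\setminus X$ is connected and simply connected, and local connectedness together with compactness yields a neighbourhood basis of $X$ consisting of closed topological disks. Given the isotopy $(f_t)_{t\in[0,1]}$ of embeddings with $f_0=\mathrm{id}_X$ and $f_1=f$, I would extend $(f_t)$ to an ambient isotopy $(F_t)$ of $\R^2$ with $F_0=\mathrm{id}$ and put $F:=F_1$; then $F$ is an orientation-preserving homeomorphism of $\R^2$ (being isotopic to the identity) with $F|_X=f$, so $\operatorname{Fix}(F)\cap X=\operatorname{Fix}(f)$. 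I expect this ambient extension to be the main obstacle: topological embeddings of a general Peano continuum need not extend ambiently, and one has to use planarity --- Schoenflies-type theorems applied to piecewise-linear approximations of the $f_t$ on the successive closed disk neighbourhoods of $X$, followed by a limiting argument.

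\emph{Step 2: localising at the orbit.} Write $\mathcal O=\{p_0,\dots,p_{n-1}\}\subset C$ for the periodic orbit. Since $n\ge 2$ no $p_i$ is fixed, so $\mathcal O\cap\operatorname{Fix}(F)=\emptyset$. By assumption $\operatorname{Fix}(f)\cap C=\emptyset$, and since $C\subseteq X$ we get $\operatorname{Fix}(F)\cap C=\operatorname{Fix}(F)\cap X\cap C=\operatorname{Fix}(f)\cap C=\emptyset$; being connected, $C$ lies in a single connected component $V$ of the open set $\R^2\setminus\operatorname{Fix}(F)$, hence $\mathcal O\subset V$. Inside the open, connected, planar set $V$ I would join the finitely many points $p_0,\dots,p_{n-1}$ by a finite tree $T\subset V$ and let $D\subset V$ be a closed regular neighbourhood of $T$: then $D$ is a closed topological disk with $\mathcal O\subset\inter D\subset D\subset V$, and $F|_D$ is a fixed-point-free orientation-preserving embedding of a closed disk into $\R^2$ (so, in particular, the fixed-point index of $F$ along $\partial D$ vanishes).

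\emph{Step 3: removing the remaining fixed points.} The last ingredient is the lemma --- which should follow from elementary planar topology --- that a fixed-point-free orientation-preserving embedding of a closed disk $D$ into $\R^2$, whose boundary index necessarily vanishes, extends to a fixed-point-free orientation-preserving homeomorphism of $\R^2$; one proves it by untwisting the boundary map to a translation in a collar of $\partial D$ and then extending by a translation-like homeomorphism near infinity, checking that no new fixed points are created. Applying this lemma to $F|_D$ gives a fixed-point-free orientation-preserving homeomorphism $F'$ of $\R^2$ with $F'|_D=F|_D$. Since $\mathcal O\subset\inter D$ and $F'$ agrees with $f$ on the orbit, $\mathcal O$ is a periodic orbit of $F'$; but then Theorem~\ref{thm:franks} applied to $F'$ produces a fixed point of $F'$, contradicting that $F'$ is fixed-point free. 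Therefore $f$ must have a fixed point in $C$.
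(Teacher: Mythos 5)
The decisive gap is the lemma you invoke in Step~3: it is false. A fixed-point-free orientation-preserving embedding of a closed disk into $\R^2$ need \emph{not} extend to a fixed-point-free homeomorphism of the plane, and vanishing of the fixed-point index on $\partial D$ is nowhere near sufficient. A minimal counterexample: let $F$ be the rotation of $\R^2$ by angle $\pi$, let $\mathcal O=\{(1,0),(-1,0)\}$ (a period-two orbit), and let $D$ be a thin closed disk neighbourhood of an arc joining these two points through the upper half plane, avoiding the origin. Then $F\vert_D$ is a fixed-point-free orientation-preserving embedding of a disk carrying $\mathcal O$; if your lemma held, $F\vert_D$ would extend to a fixed-point-free homeomorphism of $\R^2$ with a periodic orbit, contradicting Theorem~\ref{thm:franks}. (The example in Section~\ref{sec:discussion} of the paper, a disk neighbourhood of a segment mapped onto a neighbourhood of a semicircle with endpoints swapped, refutes it equally well.) Note that this same example also defeats the combination of your Steps~2--3: there $V=\R^2\setminus\{0\}$ is an invariant component of the complement of the fixed point set containing the orbit, and your $D$ sits inside $V$, yet no fixed-point-free global extension of $F\vert_D$ can exist. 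So choosing $D$ inside a fixed-point-free component of $\R^2\setminus\operatorname{Fix}(F)$ does not supply the missing hypothesis. What is actually needed, and what the paper's Proposition~\ref{prop:extension} requires, is that $D\cap F(D)$ be \emph{connected}; your regular neighbourhood of a tree through the orbit gives no control whatsoever over the components of $D\cap F(D)$, and in the rotation example that intersection is disconnected, with the orbit split across its components. Handling the disconnected case without destroying the orbit is precisely the content of Theorem~\ref{thm:jordan_general} (where the map is modified outside the distinguished component, so the extension no longer agrees with $f$ on all of $D$), and it is also where the hypothesis that the periodic orbit lies in a \emph{single} component $C$ of $X\cap Y$ enters --- a hypothesis your argument never genuinely uses, which is itself a warning sign, since without it the statement is false.

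Two further remarks. First, your Step~1 is correct in substance but you are re-deriving a cited result: the extension of $f$ to an orientation-preserving homeomorphism of the sphere (hence, after a harmless adjustment at $\infty$, of the plane) is exactly equivalence (1)$\Leftrightarrow$(3) of Theorem~\ref{thm:orpres}, and your sketch via PL approximation and limits is not a proof. Second, even granting Steps~1--2, the paper's route is structurally different and this difference is forced by the failure of your Step~3: it first proves the disk case with the connectedness hypothesis (Proposition~\ref{prop:extension}), upgrades it to a component-wise statement (Theorem~\ref{thm:jordan_general}), and then, for general $X$, builds Jordan-domain neighbourhoods $N^\varepsilon\supset X$, $M^\varepsilon\supset Y$ via Carath\'eodory extensions of Riemann maps, extends $f$ over them, and uses a limiting argument together with Lemma~\ref{lem:separation_lemma} to force the resulting fixed points into the component $C$. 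Some replacement for that localisation machinery would be needed even if you repaired Step~3, because a fixed point of an extension is a priori only a fixed point somewhere in the plane, not in $C$.
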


In fact, as a corollary we will obtain a slightly stronger result:
\begin{corollary}\label{cor:main_cor}
 Let $X \subset \R^2$ and $f \colon X \to Y$ be as in Theorem \ref{thm:main_thm} 
and let $C$ be a connected component of $X \cap Y$. 
If $f$ has no fixed point in $C$, then the orbit of every point $x \in C$ eventually leaves $C$, 
i.e., there exists $n = n(x) \in \N$ such that $f^n(x) \notin C$. 

In particular, if $X \cap Y$ is connected and $f$ has no fixed points, then
the \textit{non-escaping set} of $f$ is empty:
\begin{equation*}
 \{x \in X \colon f^n(x) \in C ~\forall n \in \N \} = \emptyset.
\end{equation*}
\end{corollary}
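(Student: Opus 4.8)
The plan is to obtain the corollary from the \emph{proof} of Theorem~\ref{thm:main_thm} rather than from its bare statement, since ``non-escaping'' is strictly weaker than ``periodic'' and a non-escaping orbit need not contain a periodic one. The key point I would use is that the Brouwer--Barge--Franks circle of ideas behind Theorem~\ref{thm:main_thm} is robust: I expect its proof actually establishes the stronger fact that a \emph{periodic disk chain inside $C$} --- a finite cyclic family of free topological disks $U_0,\dots,U_k=U_0$ contained in $C$ with $f^{m_i}(U_i)\cap U_{i+1}\neq\varnothing$ for some $m_i\ge1$, in the sense of Barge and Franks \cite{Barge1993} --- already forces a fixed point of $f$ in $C$, exactly as the classical plane translation theorem upgrades from periodic points to periodic disk chains. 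Granting this strengthened version, the corollary reduces to manufacturing such a chain from a non-escaping orbit.

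So suppose, for contradiction, that $f$ has no fixed point in $C$ while some $x\in C$ satisfies $f^n(x)\in C$ for all $n\in\N$ (each iterate is then automatically defined, since $C\subseteq X$). After the reduction used in the proof of Theorem~\ref{thm:main_thm} --- replace $X$ by a small closed-disk neighbourhood, using that $X$ is cellular, and extend the isotopy, so that $f$ is defined on a genuine $2$-disk with $C$ in its interior --- small round disks around orbit points lie in the domain of $f$. The forward orbit lies in the compact set $C$, so $\omega(x)$ is a nonempty compact subset of $C$; pick $y\in\omega(x)$, which by assumption is not fixed. Since $y\neq f(y)$ and a neighbourhood of $C$ is disjoint from the closed set $\mathrm{Fix}(f)$, choose a small free disk $U\ni y$; the orbit of $x$ enters $U$ at an infinite increasing sequence of times $n_1<n_2<\cdots$, and for fixed large $j$ the orbit segment from time $n_j$ to time $n_{j+1}$ stays in $C$. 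Covering it by small free disks $U_0=U,U_1,\dots,U_{m_j}=U$ around its successive points ($m_j=n_{j+1}-n_j$) gives $f(U_i)\cap U_{i+1}\neq\varnothing$, i.e.\ a periodic disk chain in $C$ avoiding $\mathrm{Fix}(f)$; by the strengthened Theorem~\ref{thm:main_thm} this produces a fixed point of $f$ in $C$, a contradiction. The ``in particular'' clause is then immediate: when $X\cap Y$ is connected, $C=X\cap Y$, every fixed point of $f$ lies in its domain $X$ and its image $Y$ hence in $X\cap Y=C$, so ``$f$ has no fixed points'' is the same as ``$f$ has no fixed point in $C$'', and the displayed non-escaping set is precisely the set of $x$ whose entire forward orbit stays in $C$, which we have just shown to be empty.

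The step I expect to be the genuine obstacle is the one I have deferred to (my strengthened reading of) the proof of Theorem~\ref{thm:main_thm}: upgrading ``periodic orbit'' to ``periodic disk chain'' \emph{while keeping the resulting fixed point inside the component $C$}. Because $f$ is only a partial map, its extensions to a homeomorphism of $\R^2$ can acquire spurious fixed points outside $C$, so one cannot simply invoke the planar Barge--Franks theorem; the localization must use that $C$ --- and, after the reduction, each component of the intersection of the two disks involved --- is a full, non-separating continuum, and restrict the plane homeomorphism to an invariant simply connected region carrying the whole chain before applying the Franks-type obstruction. Once that disk-chain version of Theorem~\ref{thm:main_thm} is in hand, the deduction above is routine soft dynamics.
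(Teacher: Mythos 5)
Your reduction of a non-escaping orbit to a periodic disk chain (pick $y\in\omega(x)$, take a small free disk around each point of an orbit segment returning to a free disk $U\ni y$) is fine, and your handling of the ``in particular'' clause is correct. But the proposal has a genuine gap at exactly the place you flag yourself: everything is made to rest on a ``strengthened reading'' of Theorem~\ref{thm:main_thm} --- that a periodic disk chain contained in $C$ forces a fixed point of $f$ \emph{in $C$} --- and this statement is never proved. It also does not follow from the proof of Theorem~\ref{thm:main_thm} as it stands: that proof feeds a genuine periodic orbit of the extension $F_{\varepsilon}$ into Theorem~\ref{thm:jordan_general}, and a disk chain (or non-escaping orbit) gives $F_{\varepsilon}$ no periodic point, so the argument does not apply verbatim. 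Deferring the crux to an unproved black box means the corollary is not actually established.

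What is needed, and what the paper does instead, is to rerun the extension machinery with a different Jordan-domain input: Corollary~\ref{cor:jordan_general_rec}, whose own proof is where the real work sits. There one performs the surgery of Theorem~\ref{thm:jordan_general} to make $\tilde E\cap D=C$, observes that $\tilde f$ is then fixed point free, extends it via Proposition~\ref{prop:extension} to a fixed-point-free orientation-preserving homeomorphism of $\R^{2}$, and invokes Franks' result that Brouwer homeomorphisms are \emph{free}; the accumulation-point/small-disk argument (the same soft step you use) then rules out non-escaping points. For general $X$ one still needs the $N^{\varepsilon}$, $M^{\varepsilon}$ construction via Carath\'eodory and the induced prime-end circle map $\hat f$ (not merely ``$X$ is cellular, extend the isotopy''), and --- crucially --- the localization of the resulting fixed point to $C$ rather than to another component of $X\cap Y$ inside $K^{\varepsilon}$, which is done by shrinking $\varepsilon$ and applying Lemma~\ref{lem:separation_lemma} as in Claims~\ref{cl:int_fp} and~\ref{cl:good_fp}. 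None of this is supplied in your proposal; since it is precisely the content you acknowledge as ``the genuine obstacle,'' the proof is incomplete rather than merely taking a different route.
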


To prove these results, in Section \ref{sec:jordan} we will first consider
the case of an orientation-preserving homeomorphism
$f\colon D \to E \subset \R^2$ of a Jordan domain $D$ into the plane. 
In this special case, the statement will be first proved for $D \cap E$ connected, 
by suitably extending $f$ to the real plane and applying Theorem \ref{thm:franks}.
We will then proceed to show that the connectedness assumption can be removed
if one formulates the result more precisely, 
taking into account the connected components of $D \cap E$ individually. 
Finally, in Section \ref{sec:gen_sets}, we will deduce the general case of 
Theorem \ref{thm:main_thm} by reducing the problem to the Jordan domain case. 
In Section \ref{sec:discussion} we will discuss the assumptions of our results
and questions about possible extensions.

\section{Non-self maps of Jordan domains}\label{sec:jordan}

A set $D \subset \R^2$ is a \textit{Jordan domain}, if it is a compact set with boundary $\partial D$ a
simple closed curve (\textit{Jordan curve}). By Schoenflies' theorem, 
Jordan domains are precisely the planar regions homeomorphic to the (closed) disk. 
In this paper we assume that all Jordan curves are endowed with the counter-clockwise orientation. 
For a Jordan curve $C$ and $x,y \in C$, we denote by $(x,y)_C$ (respectively $[x,y]_C$) the open (respectively closed) arc
in $C$ from $x$ to $y$ according to this orientation. 

For $X \subset \R^2$, we say that $x \in X$ is a \textit{fixed point} 
for the map $f \colon X \to \R^2$ if $f(x) = x$. 
We say that $x \in X$ is a \textit{periodic point} for $f$ 
if there exists $n \in \N$ such that $f^n(x) = x$. This of course requires
that the entire \textit{orbit} of $x$, $\mathcal{O}_f(x) = \{x = f^n(x), f(x), \ldots, f^{n-1}(x)\}$, 
is included in $X$. 

In analogy to Theorem \ref{thm:franks}, we will prove the following:

\begin{theorem}\label{thm:conn_jordan}
Let $D \subset \R^2$ be a Jordan domain and let $f \colon D \to E \subset \R^2$ 
be an orientation-preserving homeomorphism. Assume that $D \cap E$ is connected.
If $f$ has a periodic point, then $f$ also has a fixed point.  
\end{theorem}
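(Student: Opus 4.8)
The strategy is to extend $f\colon D\to E$ to an orientation-preserving homeomorphism $F\colon\R^2\to\R^2$ whose fixed-point set inside $D\cap E$ agrees with that of $f$, and then invoke Theorem~\ref{thm:franks}. Since a periodic point $x$ of $f$ has its whole orbit $\mathcal O_f(x)$ inside $D$, and applying $f$ to the orbit keeps it inside $E$, we have $\mathcal O_f(x)\subset D\cap E$; thus $x$ is a periodic point of $F$, so $F$ has a fixed point $p\in\R^2$. The work is to arrange the extension so that $p$ is forced to lie in $D\cap E$ and to be a genuine fixed point of $f$ itself, i.e.\ $p\in D$ and $F(p)=f(p)=p$.

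\medskip

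\textbf{Step 1: reduce to a standard picture.} By the Schoenflies theorem I may assume $D=\overline{\D}$ is the closed unit disk. Consider the set $D\setminus E$: its points are mapped by $f$ into $E$ but start outside $E$, so none of them is fixed and none lies on an orbit contained in $D\cap E$. Likewise $E\setminus D$ consists of images of points of $D$ that land outside $D$. Since $D\cap E$ is connected and closed, and $D$ is a disk, $D\setminus E$ is a (possibly empty) open subset of $D$; intuitively we want to ``push'' these points out of the way. The key qualitative fact I will use is that $f$ restricted to $\partial D$ either has a fixed point on $\partial D$ (in which case we are done) or, being an orientation-preserving embedding of the circle into the plane isotopic to the identity, its behaviour near $\partial D$ can be normalized.

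\medskip

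\textbf{Step 2: build the extension.} Choose a large round disk $B\supset D\cup E$. On the annulus $B\setminus\inter D$ I want to define $F$ so that it agrees with $f$ on $\partial D$, equals the identity on $\partial B$, is an orientation-preserving homeomorphism of $B\setminus\inter D$ onto $B\setminus\inter E$, and has \emph{no} fixed points there. This is possible because $f|_{\partial D}\colon\partial D\to\partial E$ is isotopic, within the plane minus a point, to the identity inclusion (orientation-preservation plus the isotopy-to-identity hypothesis), so the two collars can be interpolated by a fixed-point-free homeomorphism; if $f|_{\partial D}$ does have a fixed point we instead stop and output that point. Then glue: set $F=f$ on $D$ and $F=$ the constructed map on $B\setminus\inter D$, and $F=\mathrm{id}$ outside $B$. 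The result is an orientation-preserving homeomorphism of $\R^2$ whose fixed points all lie in $\inter D$, and in fact in $\inter(D\cap E)$ since a fixed point $q$ of $f$ in $\inter D$ satisfies $q=f(q)\in E$.

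\medskip

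\textbf{Step 3: conclude.} The periodic point $x$ of $f$ is a periodic point of $F$, so by Theorem~\ref{thm:franks} $F$ has a fixed point $p$; by construction $p\in\inter D$, hence $F(p)=f(p)=p$ and $p$ is a fixed point of $f$ lying in $D\cap E$. \hfill\qed-style conclusion.

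\medskip

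\textbf{Main obstacle.} The delicate point is Step~2: producing a fixed-point-free homeomorphic interpolation on the collar between $\partial D$ and $\partial B$ that matches $f|_{\partial D}$ on the inside and the identity on the outside. One must genuinely use that $f$ is isotopic to the identity on $D$ (so that $f|_{\partial D}$, as an embedding of a circle in the plane, bounds on both sides in a compatible way and carries the correct orientation) — without this, the extension need not exist, or could be forced to have a fixed point in the annulus, which would break the argument. Handling the case where $D\setminus E$ or $E\setminus D$ meets $\partial D$, and checking that the glued map is a homeomorphism along $\partial D$ and $\partial B$, are the technical heart of the proof; everything else is soft.
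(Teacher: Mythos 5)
There is a genuine gap, and it sits exactly where your argument needs to be airtight. You glue $F=\mathrm{id}$ on $\partial B$ and outside $B$. This immediately contradicts your own requirements and conclusions in two ways. First, you ask for a fixed-point-free homeomorphism of $B\setminus\inter D$ onto $B\setminus\inter E$ that equals the identity on $\partial B$ — but $\partial B\subset B\setminus\inter D$, so every point of $\partial B$ is fixed and the map cannot be fixed point free there. Second, and fatally for Step 3, your extension has an entire unbounded region of fixed points (everything outside $B$), so the claim ``the fixed points of $F$ all lie in $\inter D$'' is false, and Theorem~\ref{thm:franks} becomes useless: it only asserts that $F$ has \emph{some} fixed point, with no control over its location, and your $F$ trivially has plenty of them far away from $D$. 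Nothing in the theorem lets you conclude $p\in\inter D$, which is the whole point. (A smaller issue: you invoke an ``isotopic to the identity'' hypothesis that Theorem~\ref{thm:conn_jordan} does not contain — it only assumes $f$ is orientation preserving — and the fixed-point-free interpolation on the collar is asserted, not constructed, even though you correctly flag it as the technical heart.)

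The paper avoids exactly this trap by arguing in the contrapositive and by making the extension nontrivial all the way to infinity. Assuming $f$ has \emph{no} fixed point (on all of $D$, boundary included), it partitions $\R^2\setminus\inter(D\cap E)$ into arcs, each running from $\partial(D\cap E)$ to $\infty$ and meeting each of $\partial D$ and $\partial E$ exactly once, equipped with monotone ``length'' functions $\lambda^D,\lambda^E$ vanishing on $\partial D$ and $\partial E$ respectively (Lemma~\ref{lem:part}). The extension sends the arc through $x\in\partial D$ into the arc through $f(x)$, matching $\lambda^D$ with $\lambda^E$; a fixed point $p$ outside $D$ would force $\lambda^D(p)=\lambda^E(p)$ and hence $f(\pi^D(p))=\pi^D(p)$, contradicting fixed-point-freeness of $f$ on $\partial D$ (Proposition~\ref{prop:extension}). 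So $F$ is a Brouwer homeomorphism of the whole plane — in particular it is \emph{not} the identity near infinity — and then a periodic point of $f$ would be a periodic point of $F$, contradicting Theorem~\ref{thm:franks}. If you want to salvage your direct version, you must give up the compactly supported extension: either reproduce something like the paper's global fixed-point-free extension under the no-fixed-point assumption, or replace Theorem~\ref{thm:franks} by a tool that localizes fixed points (e.g.\ an index argument), which is a different proof.
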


\begin{remark}
All periodic or fixed points of $f \colon D \to E$ necessarily lie in $D \cap E$. 
\end{remark}

The strategy of our proof is to show that a homeomorphism $f \colon D \to E$ as in Theorem \ref{thm:conn_jordan} 
and without fixed points can be extended to an orientation-presering fixed point free homeomorphism 
$F \colon \R^2 \to \R^2$ (a \textit{Brouwer homeomorphism}). 
The result then follows from classical results such as Theorem \ref{thm:franks}.

Our first step is to consider the structure of the set $D \cap E$. 
It is known that each connected component of the intersection of two Jordan domains
is again a Jordan domain (see, e.g., Ker\'ekj\'art\'o \cite[p.87]{Kerekjarto1923}). 
We need the following more detailed statement (Bonino {\cite[Proposition 3.1]{Bonino2002}}, 
proved in Le Calvez and Yoccoz {\cite[Part 1]{LeCalvez1997}}).

\begin{proposition}\label{prop:jordan}
 Let $U, U'$ be two Jordan domains containing a point $o \in U \cap U'$
such that $U \nsubset U'$ and $U' \nsubset U$. 
Denote the connected component of $U \cap U'$ containing $o$ by $U \wedge U'$. 
\begin{enumerate}
 \item There is a partition
\begin{equation*}
 \partial (U \wedge U') = 
\left( \partial (U \wedge  U') \cap \partial U \cap \partial U' \right)
\cup \bigcup_{i\in I} \alpha_i \cup \bigcup_{j\in J} \beta_j~, ~\text{where}
\end{equation*}
\begin{itemize}
 \item $I,J$ are non-empty, at most countable sets, 
\item for every $i \in I$, $\alpha_i = (a_i,b_i)_{\partial U}$ 
is a connected component of $\partial U \cap U'$, 
\item for every $j \in J$, $\beta_j = (c_j,d_j)_{\partial U'}$ 
is a connected component of $\partial U' \cap U$. 
\end{itemize}
\item For $j \in J$, $U \wedge U'$ is contained in the Jordan domain 
bounded by $\beta_j \cup [d_j, c_j]_{\partial U}$. 
\item $\partial (U \wedge U')$ is homeomorphic to $\partial U$, hence is
itself a Jordan curve. 
\item Three points $a,b,c \in \partial (U \wedge U') \cap \partial U$
(respectively $\partial (U \wedge U') \cap \partial U'$) are met in this
order on $\partial U$ (respectively $\partial U'$) if and only if they are met in
the same order on $\partial (U \wedge U')$. 
\end{enumerate}
\end{proposition}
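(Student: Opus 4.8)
The plan is to reduce everything to properties of crosscuts. The basic tool is the Schoenflies-type fact that a \emph{crosscut} $\gamma$ of a Jordan domain $U$ --- an arc with its two endpoints $p,q$ on $\partial U$ and its relative interior in $\inter U$ --- cuts $U$ into exactly two pieces whose closures are Jordan domains $V,W$ with $V\cup W=U$, $V\cap W=\gamma$, $\partial V=\gamma\cup[p,q]_{\partial U}$ and $\partial W=\gamma\cup[q,p]_{\partial U}$. I will also assume $o\in\inter U\cap\inter U'$, which one may arrange by replacing $o$ with any point of $\inter(U\wedge U')$; write $K=U\wedge U'$. Since $\inter U'$ is open, $\partial U\cap\inter U'$ is relatively open in the circle $\partial U$, hence an at most countable disjoint union of open arcs $\alpha_i=(a_i,b_i)_{\partial U}$, the endpoints $a_i,b_i$ lying on $\partial U'$, so each $\overline{\alpha_i}$ is a crosscut of $U'$; symmetrically the components $\beta_j=(c_j,d_j)_{\partial U'}$ of $\partial U'\cap\inter U$ give crosscuts $\overline{\beta_j}$ of $U$. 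For part~(3) I would simply quote the theorem of Ker\'ekj\'art\'o cited above, that every component of $U\cap U'$, in particular $K$, is a Jordan domain --- then $\partial K$ is a simple closed curve, hence homeomorphic to $\partial U$, and the real content is to locate $\partial K$.

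For part~(1): if a point $x\in\partial K$ lay in $\inter U\cap\inter U'$, the component of the open set $\inter(U\cap U')$ through $x$ would be an open connected neighbourhood of $x$ inside $K$, contradicting $x\in\partial K$; hence $\partial K\subset(\partial U\cup\partial U')\cap U\cap U'$, and so $\partial K$ is the disjoint union of $\partial K\cap\partial U\cap\partial U'$, the set $\partial K\cap\inter U'$ (which lies in $\partial U\cap\inter U'=\bigcup_i\alpha_i$) and the set $\partial K\cap\inter U$ (which lies in $\partial U'\cap\inter U=\bigcup_j\beta_j$). Next, any $\alpha_i$ meeting $\partial K$ is contained in $\partial K$: it cannot meet $\inter K$ (a point of $\alpha_i$ interior to $K$ would be interior to $U$, impossible as $\alpha_i\subset\partial U$), so $\alpha_i\cap K\subset\partial K$ is relatively closed in $\alpha_i$; and it is relatively open there, because near any $y\in\alpha_i\cap\partial K$ the arc $\partial U$ locally separates a small disk into an $\inter U$-half, lying in the component $K$ of $y$, and a half in $\R^2\setminus U$, which misses $K$, so nearby points of $\alpha_i$ are again on $\partial K$; connectedness of $\alpha_i$ finishes it, and likewise for the $\beta_j$. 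Discarding the arcs disjoint from $\partial K$ and relabelling gives the partition of~(1). Finally $J\neq\emptyset$: otherwise $\partial K\subset\partial U$, forcing $\partial K=\partial U$ and $K=U\subset U'$, contrary to hypothesis; $I\neq\emptyset$ symmetrically.

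For part~(2), fix $j\in J$. By~(1) the closed arc $[c_j,d_j]_{\partial U'}=\overline{\beta_j}$ is a proper sub-arc of the Jordan curve $\partial K$, so deleting the open arc $\beta_j$ from $\partial K$ leaves a single closed arc, and hence $K\setminus\overline{\beta_j}$, a closed disk with a boundary arc removed, is connected. It lies in $U\setminus\overline{\beta_j}$, which is the disjoint union of the two pieces cut off by the crosscut $\overline{\beta_j}$; containing $o$, it must lie in the one, $\overline{V_j}$, containing $o$. Thus $K\subset\overline{V_j}$, which (with the orientation conventions of the statement) is the Jordan domain bounded by $\beta_j\cup[d_j,c_j]_{\partial U}$ --- this is part~(2).

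Part~(4) I would derive from a ``rerouting'' description of $\partial K$. Arguing as in~(1), $\partial K\setminus\partial U$ is a disjoint union of open arcs each of which, being connected and contained in $\bigcup_j\beta_j$, lies in a single $\beta_j$ and --- having endpoints in $\partial U\cap[c_j,d_j]_{\partial U'}=\{c_j,d_j\}$ --- equals $\beta_j$; dually, $\partial U\setminus\partial K$ is a disjoint union of open arcs of $\partial U$ missing $K$, and one shows each such arc is precisely the arc $(c_j,d_j)_{\partial U}$ complementary in $\partial U$ to $[d_j,c_j]_{\partial U}$ for a unique $j\in J$, the correspondence being a bijection. So $\partial K$ coincides with $\partial U$ outside the pockets and merely replaces each $(c_j,d_j)_{\partial U}$ by $\beta_j$; since three points of $\partial K\cap\partial U=\partial U\setminus\bigcup_j(c_j,d_j)_{\partial U}$ then lie on arcs of $\partial U$ untouched by the rerouting, their cyclic order is unchanged, which is~(4) for $\partial U$ (the $\partial U'$-version being symmetric). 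I expect this last structural step to be the main obstacle: it means controlling the whole, possibly countably infinite, family of crosscuts at once --- showing that the pockets $\overline{W_j}$ for distinct $j$ have disjoint interiors and, together with $K$, exhaust $U$, and that the rerouting is globally consistent. Already for a single crosscut this rests on the Jordan curve theorem, and for infinitely many it requires the arcs $\beta_j$ and $\alpha_i$ to shrink to points, so that a limiting argument genuinely produces a simple closed curve --- which is also what underlies the appeal to Ker\'ekj\'art\'o in part~(3).
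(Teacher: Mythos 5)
First, a point of orientation: the paper does not prove this proposition at all --- it is quoted from Bonino (Proposition 3.1), with the proof attributed to Le Calvez--Yoccoz --- so there is no in-paper argument to compare yours against; your attempt has to be judged on its own. On that basis, parts (1) and (2) are essentially sound: your reading of the $\alpha_i$ as components of $\partial U\cap\inter U'$ (which is what the statement must mean, since components of the compact set $\partial U\cap U'$ could not be open arcs), the open-and-closed argument showing that an $\alpha_i$ meeting $\partial(U\wedge U')$ lies entirely in it, and the crosscut argument for (2) are all correct, and quoting Ker\'ekj\'art\'o for (3) is in the same spirit as the paper, which also treats that fact as known. Two smaller glosses: in (2) you identify the piece containing $o$ with the domain bounded by $\beta_j\cup[d_j,c_j]_{\partial U}$ only by appeal to ``the orientation conventions''; the actual reason is that points of $U\wedge U'$ accumulate on the $\inter U'$-side of $\beta_j$, i.e.\ on the left of $\beta_j$ traversed from $c_j$ to $d_j$, and the boundary of that left-hand piece, oriented with the interior on the left, traverses $\partial U$ from $d_j$ to $c_j$. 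This local-side argument is not hard, but it is the entire content of the orientation claim and should be written out --- and it is also the natural starting point for (4).

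The genuine gap is part (4), and you say so yourself. Your ``rerouting'' description rests on the claim that $\partial U\setminus\partial(U\wedge U')$ is exactly the disjoint union of the arcs $(c_j,d_j)_{\partial U}$, in bijection with the $\beta_j$, which in turn requires showing that the pockets $\overline{W_j}$ (bounded by $\beta_j\cup[c_j,d_j]_{\partial U}$) have pairwise disjoint interiors, that together with $U\wedge U'$ they exhaust $U$, and in particular that every component of $\partial U\cap\inter U'$ not meeting $\partial(U\wedge U')$ (and every component of $U\cap U'$ other than $U\wedge U'$) is swallowed by some pocket. None of this is carried out; it is introduced with ``one shows'' and then acknowledged as ``the main obstacle''. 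Moreover, even granting the rerouting picture, the final step --- that the cyclic order of three points of $\partial(U\wedge U')\cap\partial U$ is the same on the two Jordan curves --- is only waved at (``their cyclic order is unchanged''); to conclude it one needs, e.g., to build a monotone degree-one map $\partial U\to\partial(U\wedge U')$ fixing $\partial(U\wedge U')\cap\partial U$ and sending each $(c_j,d_j)_{\partial U}$ onto $\beta_j$ with consistent orientation, or to run a separation argument with the crosscuts; with possibly countably many pockets whose diameters must be shown to tend to zero, this is precisely the part of the Le Calvez--Yoccoz argument that carries the weight. So as it stands the proposal proves (1)--(2), defers (3), and leaves (4) unproved.
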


We consider a Jordan domain $D \subset \R^2$ and a homeomorphism 
$f \colon D \to E = f(D) \subset \R^2$. If $D \cap E$ is connected, it follows that 
both $D \cap E$ and $D \cup E$ are Jordan domains (see Fig.\ref{fig:prop_jordan}). 
\begin{figure}
 \centering
\includegraphics[width = 0.5\textwidth]{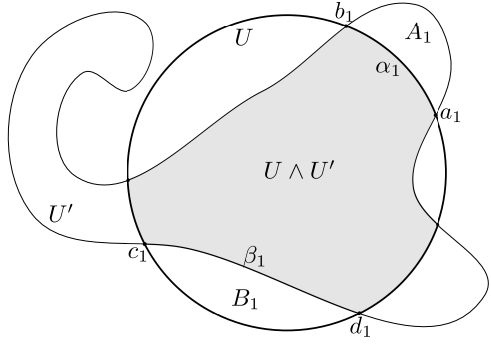}
\caption{(Proposition \ref{prop:jordan}) Partition of $\partial (U \wedge U')$ in the case when $U \cap U'$ is connected.}
\label{fig:prop_jordan}
\end{figure}
With Proposition \ref{prop:jordan} in mind, the proof of the following lemma is straightforward; 
it can be found in the appendix. 

\begin{lemma}\label{lem:part}
Let $D, E \subset \R^2$ be Jordan domains such that $D \nsubset E$, $E \nsubset D$ 
and $D \cap E$ is non-empty and connected. 
Then there exists a partition of $\R^2 \setminus \inter(D \cap E)$ into arcs, each of which connects a point on 
$\partial (D \cap E)$ to $\infty$ and intersects each of $\partial D$ and $\partial E$ in precisely one point.

Furthermore, there exist continuous functions 
$\lambda^D \colon \R^2 \setminus \inter(D) \to \R_{\geq 0}$ and $\lambda^E \colon \R^2 \setminus \inter(E) \to \R_{\geq 0}$, 
which are strictly monotonically increasing along the arcs of the partition, 
and such that $\lambda^D\vert_{\partial D} \equiv 0$ and $\lambda^E \vert_{\partial E} \equiv 0$.
\end{lemma}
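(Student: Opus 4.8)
The plan is to build the partition of $\R^2 \setminus \inter(D \cap E)$ arc by arc, using the structure of $\partial(D\cap E)$ supplied by Proposition \ref{prop:jordan}, and then to define the two functions $\lambda^D, \lambda^E$ by a gluing argument along the arcs. First I would record the picture (Fig.\ \ref{fig:prop_jordan}): since $D\cap E$ is connected, with $U=D$, $U'=E$ and any $o\in \inter(D\cap E)$, Proposition \ref{prop:jordan}(1) gives the partition of $\partial(D\cap E)$ into the common part $\partial(D\cap E)\cap\partial D\cap\partial E$, the arcs $\alpha_i=(a_i,b_i)_{\partial D}\subset \partial D\cap E$ (which lie in $\inter E$, except for endpoints), and the arcs $\beta_j=(c_j,d_j)_{\partial E}\subset\partial E\cap D$ (which lie in $\inter D$, except for endpoints). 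Since $D\cap E$ is connected, $D\cup E$ is a Jordan domain and its boundary $\partial(D\cup E)$ is assembled from the complementary pieces of $\partial D$ and $\partial E$; similarly $\R^2\setminus\inter(D\cap E)$ decomposes into the (closed) annular-type region between $\partial(D\cap E)$ and $\partial(D\cup E)$, together with the unbounded complementary component of $D\cup E$.

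Next I would construct the arcs. Over the common boundary $\partial D\cap\partial E$ there is nothing to interpolate, so on those points the arc is a single ray to $\infty$ through the unbounded component of $\R^2\setminus(D\cup E)$; concretely, fixing a homeomorphism of $\R^2\setminus\inter(D\cup E)$ with a closed half-plane or with $\{z:|z|\ge 1\}$ via Schoenflies, radial lines furnish disjoint arcs from $\partial(D\cup E)$ to $\infty$. Over each arc $\alpha_i$, I must connect its points (which lie in $\inter E\cap\partial D$) to corresponding points of $\partial E$: the region bounded by $\alpha_i\cup[b_i,a_i]_{\partial E}$ (a Jordan domain by Proposition \ref{prop:jordan}(2), applied with the roles of $U,U'$ swapped) can be identified with a rectangle so that $\alpha_i$ is the top edge and $[b_i,a_i]_{\partial E}$ the bottom edge, and the vertical segments give the desired arcs crossing $\partial D$ once (on $\alpha_i$) and $\partial E$ once. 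Symmetrically for the arcs $\beta_j$ inside $D$. One checks, using Proposition \ref{prop:jordan}(4), that the cyclic orders of the $\alpha_i$- and $\beta_j$-pieces along $\partial(D\cap E)$ are compatible with those along $\partial D$ and $\partial E$, so these locally defined families of arcs match up along the shared endpoints $a_i,b_i,c_j,d_j$ and patch together into one partition of all of $\R^2\setminus\inter(D\cap E)$ into arcs from $\partial(D\cap E)$ to $\infty$, each meeting $\partial D$ exactly once and $\partial E$ exactly once.

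Finally, to get $\lambda^D$ I would first define it on each arc of the partition, intersected with $\R^2\setminus\inter D$, as a strictly increasing parametrization starting at $0$ on the unique point of $\partial D$ on that arc (normalizing, say, so that $\lambda^D\equiv 1$ on $\partial(D\cup E)$ and then letting it increase to $\infty$, or stay bounded — any strictly monotone choice works); on the region of $\R^2\setminus\inter D$ foliated by the $\beta_j$-arcs (the part inside $D$) one does the analogous thing there. Since on each closed cell the arcs vary continuously (they come from a homeomorphism with a rectangle or a half-plane) and the boundary values agree on overlaps, this assembles to a continuous $\lambda^D$ on $\R^2\setminus\inter D$, strictly increasing along the partition arcs and vanishing on $\partial D$; $\lambda^E$ is obtained in the mirror-image way. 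The main obstacle is the bookkeeping of step two — verifying that the arc families defined over the different pieces $\alpha_i$, $\beta_j$ and over $\partial D\cap\partial E$ genuinely glue into disjoint arcs with the stated intersection properties — which is exactly where Proposition \ref{prop:jordan}(2) and (4) are used; once the foliation-by-arcs is in place, producing the functions $\lambda^D,\lambda^E$ is routine. (This is why the paper defers the details to the appendix.)
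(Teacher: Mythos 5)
Your construction of the partition itself is essentially the paper's: foliate each component $A_i$ of $\overline{E\setminus D}$ (resp.\ $B_j$ of $\overline{D\setminus E}$) by arcs joining $\alpha_i$ to $(a_i,b_i)_{\partial E}$ (resp.\ $\beta_j$ to $(c_j,d_j)_{\partial D}$) via a Schoenflies chart, foliate $\R^2\setminus\inter(D\cup E)$ by preimages of radial rays, and concatenate across $\partial(D\cup E)$, with points of $\partial D\cap\partial E$ carrying a trivial inner piece. That part is sound (the appeal to Proposition \ref{prop:jordan}(4) is harmless but not really needed: disjointness comes from the cells having disjoint interiors and the inner arcs ending on the open arcs $(a_i,b_i)_{\partial E}$, $(c_j,d_j)_{\partial D}$).

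The gap is in the last step, where you call the construction of $\lambda^D,\lambda^E$ routine and assert that ``any strictly monotone choice works''. It does not: continuity at $\partial D\cap\partial E$, where the inner arcs degenerate to points, is exactly the delicate issue. Your suggested normalization $\lambda^D\equiv 1$ on $\partial(D\cup E)$ is already inconsistent there --- for $z\in\partial D\cap\partial E$ one has $z\in\partial(D\cup E)$ and $z\in\partial D$, so $z$ would need both values $1$ and $0$ --- and, endpoints aside, it is discontinuous: approaching $b_i$ along $(a_i,b_i)_{\partial E}\subset\partial(D\cup E)\setminus\partial D$ gives $\lambda^D\equiv 1$, while $\lambda^D(b_i)=0$. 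So the per-arc parametrizations cannot be chosen independently; they must be chosen coherently so that the value accumulated along the piece inside $\overline{E\setminus D}$ tends to $0$ as that piece collapses near $a_i,b_i$. The paper arranges this by transporting Euclidean arc length through the charts: $\lambda^D(x)=\ell\bigl(h_i([p,x])\bigr)$ inside a cell $A_i$, and $\lambda^D(x)=\ell\bigl(h_i([p,z])\bigr)+\ell\bigl(h([z,x])\bigr)$ beyond $\partial(D\cup E)$; since the vertical chords of $\overline\D$ shrink to points at $(\pm 1,0)$, these values tend to $0$ at $a_i,b_i$, which is what makes $\lambda^D$ continuous. (If the index set $I$ is infinite the cells $A_i$ can accumulate on $\partial D\cap\partial E$, and one should in addition scale the chart of the $i$-th cell so that the lengths used there are controlled by $\operatorname{diam}A_i$; this is the same continuity issue in global form, and it is likewise invisible to an ``any monotone choice'' argument.) With this point repaired, your outline coincides with the paper's proof.
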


\begin{remark}
 For a partition arc $\gamma$ connecting some point $p \in \partial D$ (or $\partial E$) to $\infty$, 
the function $\lambda^D$ ($\lambda^E$) can be viewed as providing a notion of arc length (assigning to each $x \in \gamma$ 
the length of the subarc $[p,x]\subset\gamma$). It is finite for every $x \in \gamma$, 
even if the usual (Euclidean) arc length $\ell([p,x])$ is not. 
\end{remark}

The partition of $\R^2 \setminus \inter(D \cap E)$ obtained this way allows us to prove our
key proposition, from which the main result of this section will follow as a corollary. 

\begin{proposition}\label{prop:extension}
Let $D \subset \R^2$ be a Jordan domain and $f \colon D \to E \subset \R^2$ an
orientation-preserving homeomorphism with no fixed points, and 
such that $D \cap E$ is non-empty and connected.
Then there exists a fixed point free orientation-preserving homeomorphism $F \colon \R^2 \to \R^2$ 
extending $f$ (i.e., $F\vert_D \equiv f$). 
\end{proposition}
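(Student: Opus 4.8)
\textbf{Proof plan for Proposition \ref{prop:extension}.}

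The plan is to build the extension $F$ in concentric ``layers'' using the partition of $\R^2 \setminus \inter(D \cap E)$ from Lemma \ref{lem:part}. Write $K = D \cap E$, a Jordan domain by the discussion preceding the lemma. First I would dispose of the trivial cases: if $D \subset E$ or $E \subset D$ the intersection is all of $D$ (resp.\ $E$), and a standard direct extension of a fixed-point-free self-embedding of a disk to a Brouwer homeomorphism applies; so assume $D \nsubset E$, $E \nsubset D$ and invoke Lemma \ref{lem:part} to get the partition of $\R^2 \setminus \inter(K)$ into arcs running from $\partial K$ to $\infty$, each meeting $\partial D$ and $\partial E$ exactly once, together with the ``radial coordinate'' functions $\lambda^D$, $\lambda^E$.

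The idea is that $f$ maps $D$ into $E$, so on $D$ the map already pushes points ``inward'' relative to $E$; I want to extend across the collar $E \setminus \inter(D)$ and then across $\R^2 \setminus \inter(E)$ so that every point of $\R^2 \setminus K$ is pushed strictly outward along the partition arcs, guaranteeing no new fixed points. Concretely: (i) On $D$, set $F = f$. (ii) For a point $x \in E \setminus \inter(D)$, it lies on a unique partition arc $\gamma$; let $p = \gamma \cap \partial D$ and $q = \gamma \cap \partial E$. Since $f$ is a homeomorphism of $\partial D$ onto $\partial E \subset E$, the point $f(p) \in \partial E$ is the base of some partition arc $\gamma'$; I define $F$ on the subarc $[p,q]_\gamma$ to interpolate continuously between $f(p)$ and a point on $\gamma'$ strictly beyond $\partial E$ (using $\lambda^E$ to measure ``beyond''), doing this consistently in the arc parameter so the result is a homeomorphism of the collar $E \setminus \inter(D)$ onto an annular region $\widetilde E \setminus \inter(E)$ for a slightly larger Jordan domain $\widetilde E$, and so that it agrees with $f$ on $\partial D$ and is fixed-point-free (the image point has strictly larger $\lambda^E$ than the source on the same arc, or lies on a different arc). (iii) Finally on $\R^2 \setminus \inter(\widetilde E)$, which is homeomorphic to a closed half-infinite annulus $S^1 \times [0,\infty)$, I extend $F$ to any fixed-point-free homeomorphism onto $\R^2 \setminus \inter(F(\widetilde E))$ that pushes the $[0,\infty)$-coordinate strictly up — e.g.\ a radial ``translation toward $\infty$'' in these coordinates — which is possible since $F(\widetilde E)$ is a Jordan domain containing $\widetilde E$ in its interior and both complements are standard half-open annuli. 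Orientation-preservation is automatic since $f$ is orientation-preserving and each extension step is built by isotopy/interpolation from the identity-type model; continuity at the gluing circles $\partial D$ and $\partial \widetilde E$ is arranged by matching boundary values.

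The main obstacle is step (ii): making the extension across the collar $E \setminus \inter(D)$ genuinely a homeomorphism while simultaneously (a) matching $f$ on $\partial D$, (b) landing on a prescribed round-ish Jordan curve $\partial \widetilde E$ on the outside, and (c) keeping it fixed-point-free. The subtlety is that $\partial D$ and $\partial E$ can be wildly intertwined — a partition arc based at $p \in \partial D$ and the arc based at $f(p) \in \partial E$ need not be ``close'' — so one cannot simply slide along a single arc. What saves the construction is Proposition \ref{prop:jordan}(4): the cyclic order of points is preserved among $\partial D$, $\partial E$, and $\partial K$, so the partition arcs are cyclically ordered coherently, and the assignment $p \mapsto (\text{arc through } f(p))$ is an orientation-preserving circle homeomorphism in the ``arc-index'' coordinate; this lets one build the collar map as a skew product over that circle homeomorphism, linear in a suitably reparametrized radial coordinate given by $\lambda^E$. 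Once the collar map is a homeomorphism with the stated boundary behavior, the fixed-point-freeness on the collar follows because $f(D) = E$ forces $F$ to strictly increase $\lambda^E$ off $\partial D$, and steps (i) and (iii) contribute no fixed points by construction; the extended $F$ is then the desired Brouwer homeomorphism.
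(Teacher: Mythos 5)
Your core idea---extend $f$ beyond $D$ along the partition arcs of Lemma \ref{lem:part}, as a skew product over the boundary map $f\vert_{\partial D}$ with $\lambda^D,\lambda^E$ as radial coordinates, so that no new fixed points can appear---is exactly the mechanism of the paper's proof, and your appeal to Proposition \ref{prop:jordan}(4) for coherence of the cyclic order is in the right spirit. However, the layered implementation you describe contains steps that fail as stated. Step (ii) cannot produce ``a homeomorphism of the collar $E \setminus \inter(D)$ onto an annular region $\widetilde E \setminus \inter(E)$'': since $\partial D$ and $\partial E$ cross, $E \setminus \inter(D)$ is (up to boundary points) the union of the pairwise disjoint Jordan domains $A_i$ bounded by $\alpha_i \cup [a_i,b_i]_{\partial E}$---a disjoint union of disks, in general disconnected and never an annulus---whereas $\widetilde E \setminus \inter(E)$ for a Jordan domain $\widetilde E$ with $E \subset \inter(\widetilde E)$ is a closed annulus, so no such homeomorphism exists. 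Relatedly, your three layers do not fit together: $D$ and $\R^2 \setminus \inter(E)$ overlap in $D \setminus \inter(E) \neq \emptyset$ (nonempty because $D \nsubset E$), where steps (i) and (iii) prescribe conflicting values; after (i)--(ii) the region still to be mapped is $\R^2 \setminus \inter(D \cup E)$, not $\R^2 \setminus \inter(\widetilde E)$; and ``$F(\widetilde E)$'' in (iii) refers to values of $F$ that are not yet defined. You appear to be picturing the nested situation $D \subset \inter(E)$, which is exactly what cannot happen here; indeed both inclusions $D \subset E$ and $E \subset D$ are impossible under the no-fixed-point hypothesis (apply Brouwer's fixed point theorem to $f$ or $f^{-1}$), so your ``trivial cases'' are vacuous, and the ``standard extension of a fixed-point-free self-embedding of a disk'' you invoke does not exist.

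The repair is the paper's one-step construction, which makes the layering unnecessary: for $x \notin D$ define $F(x)$ directly as the unique point $y \in L^{f(\pi^D(x))} \setminus E$ with $\lambda^E(y) = \lambda^D(x)$, where $\pi^D(x) = L^x \cap \partial D$ and $\pi^E(x) = L^x \cap \partial E$. This maps $\R^2 \setminus \inter(D)$ onto $\R^2 \setminus \inter(E)$ arc-by-arc; continuity follows from continuity of $\pi^D,\pi^E$, and the inverse is obtained by the symmetric construction with $E, f^{-1}$ in place of $D, f$. Note also that your fixed-point criterion ``the image point has strictly larger $\lambda^E$ than the source'' does not typecheck on the collar, because $\lambda^E$ is only defined on $\R^2 \setminus \inter(E)$ while the sources there lie in $E$; the correct argument is: if $F(p) = p$ with $p \notin D$, then $p \in L^{f(\pi^D(p))}$, and $\lambda^D(p) = \lambda^E(F(p)) = \lambda^E(p)$ forces $\pi^D(p) = \pi^E(p)$, hence $f(\pi^D(p)) = \pi^D(p)$, contradicting that $f$ has no fixed points on $\partial D$. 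With these corrections your plan collapses into the paper's argument rather than constituting a genuinely different route.
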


\begin{proof}
Since $f$ has no fixed points, 
by Brouwer's Fixed Point Theorem we get that $E \nsubset D$ and $D \nsubset E$.
Therefore, we can apply Lemma \ref{lem:part} to obtain a partition $\mathcal{P}$ of $\R^2 \setminus \inter(D \cap E)$ into arcs, 
each connecting a point in $\partial (D \cap E)$ with $\infty$ and 
intersecting each of $\partial D$ and $\partial E$ in exactly one point.

For $x \in \R^2 \setminus \inter(D \cap E)$, 
we denote by $L^x \in \mathcal{P}$ the partition element containing the point $x$, and let 
$\pi^D(x) = L^x \cap \partial D$, $\pi^E(x) = L^x \cap \partial E$. From the contruction of 
$\mathcal{P}$ in the proof of Lemma \ref{lem:part} it is clear that $\pi^D\colon \R^2 \setminus \inter(D \cap E) \to \partial D$ and 
$\pi^E\colon \R^2 \setminus \inter(D \cap E) \to \partial E$ are continuous.

We now construct $F \colon \R^2 \to \R^2$ as an extension of $f$ in such a way, 
that each arc $L^x \setminus D$, $x \in \partial D$, is mapped into the arc $L^{f(x)}$. 
For $x \in D$ we define $F(x) = f(x)$, and for $x \notin D$ we define $F(x)$ to be the unique point 
$y \in L^{f(\pi^D(x))} \setminus E$ such that $\lambda^E(y) = \lambda^D(x)$ (see Fig.\ref{fig:proof_ext}). 
Then, by the construction of $\mathcal{P}$ and continuity of $\pi^D$ and $\pi^E$, $F$ is a continuous extension of $f$. 
\begin{figure}
  \centering
    \includegraphics[width=0.4\textwidth]{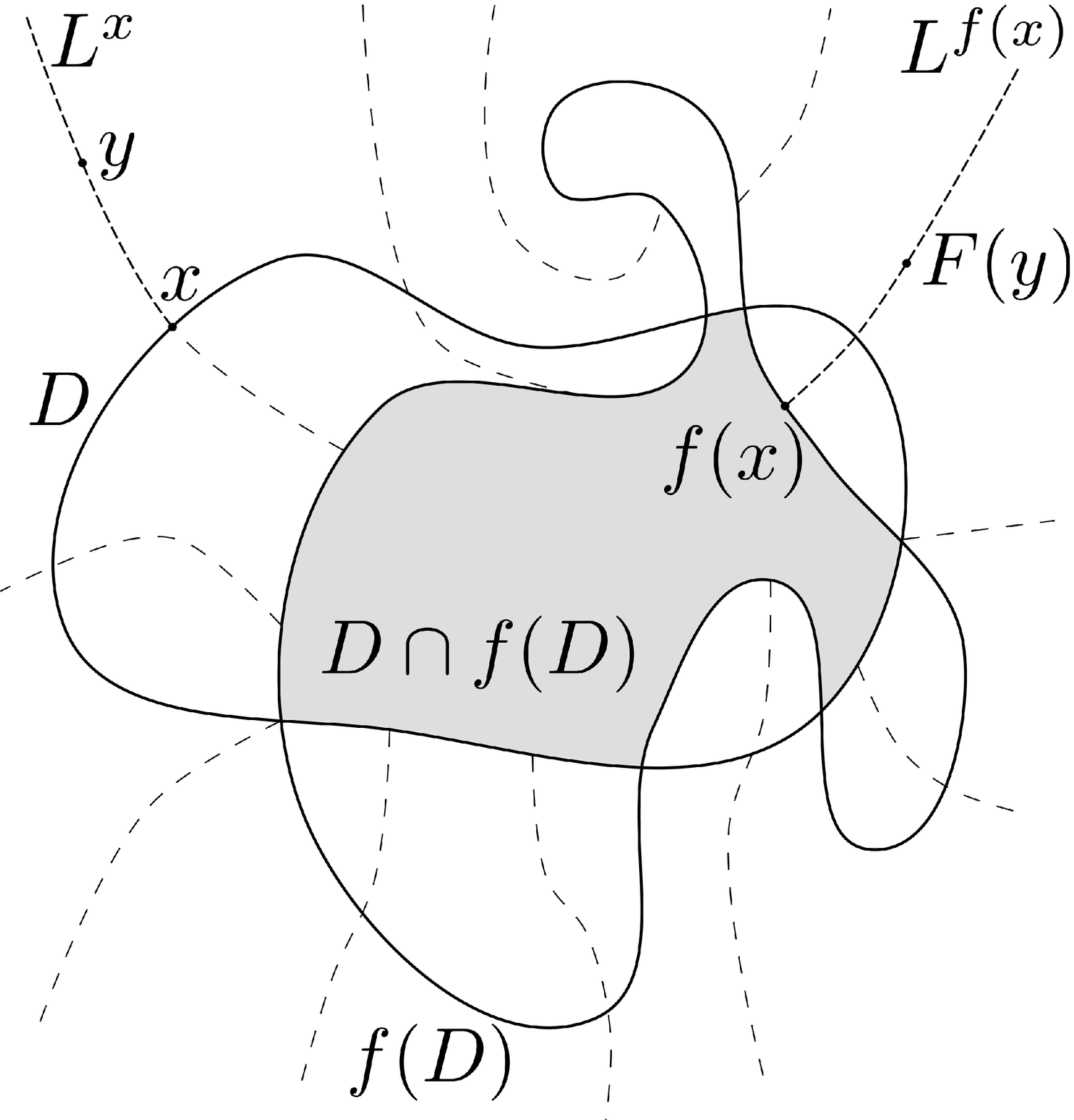}
  \caption{(Proof of Proposition \ref{prop:extension}) Extension $F \colon \R^2 \to \R^2$ of $f \colon D \to f(D)$ 
maps each arc $L^x$, $x \in \partial D$, into the arc $L^{f(x)}$.}
\label{fig:proof_ext}
\end{figure}

The map $F$ is an orientation-preserving homeomorphism of $\R^2$; 
its inverse can be obtained the same way by swapping the roles of $D, f$ and $E, f^{-1}$.
Moreover, $F$ has no fixed points in $D$ since $F\vert_D \equiv f$.
On the other hand, suppose $p \in \R^2 \setminus D$ with $F(p) = p$.  
Then $F(L^p \setminus D) \subset L^p$. 
Since $\lambda^D(p) = \lambda^E(F(p)) = \lambda^E(p)$, we get $\pi^D(p) = \pi^E(p)$, 
and therefore $f(\pi^D(p)) = \pi^D(p)$, a contradiction. Hence $F$ is fixed point free. 
\end{proof}

Combining Proposition \ref{prop:extension} and Theorem \ref{thm:franks}, 
we now obtain Theorem \ref{thm:conn_jordan}. 

\medskip

Next, we formulate a somewhat more general form of Theorem \ref{thm:conn_jordan}, 
by removing the restriction of $D \cap f(D)$ to be connected, and instead making an assertion
for each individual connected component of this set. This version of the result
turns out to be much more useful in its application to more general compact sets 
in Section \ref{sec:gen_sets}. 

\begin{theorem}\label{thm:jordan_general}
Let $D \subset \R^2$ be a Jordan domain and 
$f \colon D \to E \subset \R^2$ an orientation-preserving homeomorphism. 
Let $C$ be a connected component of $D \cap E$. 
If $f$ has a periodic orbit in $C$, 
then $f$ also has a fixed point in $C$.  
\end{theorem}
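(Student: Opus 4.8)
The plan is to reduce the general case to the connected case (Theorem \ref{thm:conn_jordan}) by modifying the domain $D$ so that the intersection with its image becomes exactly the component $C$, while preserving orientation, the absence of extra fixed points, and the dynamics on $C$. First I would dispose of the trivial cases: if $D \subset E$ or $E \subset D$, then $D \cap E$ is already connected (it equals $D$ or $E$), so $C = D \cap E$ and Theorem \ref{thm:conn_jordan} applies directly. So assume $D \nsubset E$ and $E \nsubset D$, and let $C = D \wedge E$ be the component of $D \cap E$ guaranteed by Proposition \ref{prop:jordan} to be a Jordan domain, containing the given periodic orbit.

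The key construction is to build a new Jordan domain $D'$ with $C \subset D' \subset D$ and $D' \cap f(D') = C$, to which the connected case applies. Using part (1) of Proposition \ref{prop:jordan} applied to $U = D$, $U' = E$, the boundary $\partial C$ decomposes into a part lying on $\partial D \cap \partial E$, arcs $\alpha_i = (a_i,b_i)_{\partial D}$ that are components of $\partial D \cap E$ (i.e. the parts of $\partial D$ that poke \emph{into} $E$), and arcs $\beta_j = (c_j,d_j)_{\partial E}$ that are components of $\partial E \cap D$. To form $D'$, I would "push in" $D$ along each arc $\alpha_i$: replace the subarc $[a_i,b_i]_{\partial D}$ of $\partial D$ by a nearby arc running through the interior of $C$ (just inside $\beta$-type arcs and the $\partial E$-side), chosen close enough to $\partial C$ that the periodic orbit still lies in the interior of the new domain $D'$, and so that $D'$ is still a Jordan domain with $\partial D'$ agreeing with $\partial D$ off the $\alpha_i$'s. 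By part (2) of Proposition \ref{prop:jordan}, $C$ is cut off from the rest of $D$ by the $\beta_j$'s, so such a modification can be done on each $\alpha_i$ independently and keeps $C \subset D'$. Then $f(D')$ is close to $f(D) = E$ near the relevant arcs, and one checks that $D' \cap f(D')$ has $C$ as its only component touching the orbit — indeed, by choosing the inward push sufficiently small and on the correct side, $D' \cap f(D')$ is connected and equals (a slight shrinking of) $C$ containing the orbit in its interior.

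The main obstacle I anticipate is making the perturbation of $\partial D$ precise and verifying that $D' \cap f(D')$ is genuinely connected (and still contains the periodic orbit), since $f$ distorts the picture and one must control both $D'$ and $f(D')$ simultaneously near each $\alpha_i$ and each $\beta_j$; this is exactly where parts (2) and (4) of Proposition \ref{prop:jordan} — the nesting of $C$ inside the $\beta_j$-bounded domains and the compatibility of cyclic orders — are needed to guarantee the local pieces glue into a single Jordan domain whose self-intersection is one component. Once $D'$ is in hand, the conclusion is immediate: $f|_{D'} \colon D' \to f(D')$ is an orientation-preserving homeomorphism with connected intersection $D' \cap f(D')$ containing a periodic orbit (the given one, which lies in $C \subset \inter(D')$), so by Theorem \ref{thm:conn_jordan} it has a fixed point, which must lie in $D' \cap f(D') \subset C$, as required.
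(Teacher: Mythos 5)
Your reduction strategy (get back to Theorem \ref{thm:conn_jordan}) is the right one, but the mechanism you propose does not work. Note first that for any Jordan domain $D'$ with $C \subset D' \subset D$ one has $D' \cap f(D') \subset D \cap E$, and a connected subset of the compact set $D \cap E$ containing the given orbit must lie entirely inside the component $C$; so your $D'$ must satisfy $D' \cap f(D') \subset C$. A small inward push of $\partial D$ along the arcs $\alpha_i \subset \partial C$ cannot achieve this: if $C'$ is any other component of $D \cap E$ (the only interesting case is when such $C'$ exist), then $C'$ is at positive distance from $C$, so $C' \subset D'$ survives the perturbation, and $f(D')$ differs from $E$ only by the image of the thin removed collar, a set close to $f$ of a neighbourhood of the $\alpha_i$'s, which in general does not cover $C'$; hence $D' \cap f(D')$ still meets $C'$ as well as $C$ and is disconnected. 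The claim that "by choosing the inward push sufficiently small \ldots $D' \cap f(D')$ is connected and equals a slight shrinking of $C$" is therefore false in general. The problem persists even if you allow drastic shrinking (e.g.\ $D' = C$, or any $D'$ with $C \subset D' \subset D$): one always has $C \cap f(C) \subset D' \cap f(D')$, and $C \cap f(C)$ may itself be disconnected, so you would have to connect its pieces inside $C$ using $f(D' \setminus C)$ while simultaneously keeping $f(D')$ off $D' \setminus C$ -- a global condition on $f$ that your construction neither verifies nor can guarantee. This is exactly the obstacle you flag in your third paragraph, and it is not a technicality to be handled by Proposition \ref{prop:jordan}(2),(4); it is where the argument breaks.

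The missing idea is that one should modify the \emph{map on the image side} rather than the domain: since the desired fixed point and the given orbit lie in $C$, you are free to replace $f$ by $g \circ f$ for any homeomorphism $g \colon E \to \tilde E$ that is the identity on $C$, because a fixed point of $g \circ f$ in $C$ is automatically a fixed point of $f$. The paper's proof takes the components $C_i$ of $\overline{E \setminus C}$, each attached to $C$ along an arc $\gamma_i = C \cap C_i \subset \partial D$, and (after normalizing $D$ to the round disk by Schoenflies) chooses pairwise disjoint disks $\tilde C_i \subset \overline{\R^2 \setminus D}$ with $\tilde C_i \cap D = \gamma_i$, together with Schoenflies homeomorphisms $g_i \colon C_i \to \tilde C_i$ fixing $\gamma_i$; gluing these with the identity on $C$ gives $g$, and $\tilde f = g \circ f \colon D \to \tilde E$ satisfies $\tilde f(D) \cap D = C$, which is connected, while $\tilde f$ agrees with $f$ on $f^{-1}(C)$ and so still has the periodic orbit in $C$. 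Theorem \ref{thm:conn_jordan} then yields a fixed point of $\tilde f$ in $C$, hence of $f$. In short: instead of trying to shrink $D$ so that the genuine intersection becomes connected (which the dynamics of $f$ can forbid), push the irrelevant part of the image $E$ out of $D$ by an auxiliary homeomorphism that does not disturb $C$.
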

 
\begin{proof}
 We show that this more general setting can be reduced to the one in Theorem \ref{thm:conn_jordan}.
Let $\{ C_i \}$ be the collection of connected components of $\overline{ E \setminus C}$
and let $\gamma_i = C \cap C_i$, which is a closed subarc of $\partial D$ (a connected component of $C \cap \partial D$). 

Each $C_i$ is a Jordan domain bounded by the union of $\gamma_i$ and a closed subarc of $\partial E$. 
Let $\{\tilde C_i\}$ be another collection of disks $\tilde C_i \subset \overline{\R^2 \setminus D}$ such that $\tilde C_i \cap D = \gamma_i$ 
and the $\tilde C_i$ are pairwise disjoint. (Note that by Schoenflies' theorem we can assume without loss of generality $D$ 
to be the closed standard unit disk $\overline \D = \{(x,y) \in \R^2 \colon x^2+y^2 \leq 1 \}$. Since the $\gamma_i$ are pairwise disjoint, 
letting $\tilde C_i$ be the round halfdisk over $\partial \D$ 
centred at the midpoint of $\gamma_i$ gives a collection of pairwise disjoint disks as required, see Fig.\ref{fig:genjordanproof}.)
\begin{figure}
  \centering
  \includegraphics[width=0.45\textwidth]{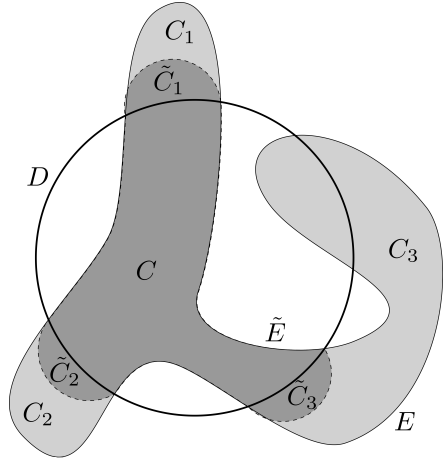}
\caption{(Proof of Theorem \ref{thm:jordan_general}) The Jordan domains $D$ (white), $E = \left(\bigcup_i C_i\right) \cup C$ (light grey)
and $\tilde E = \left(\bigcup_i \tilde C_i\right) \cup C$ (dark grey). The $\tilde C_i$ are pairwise disjoint, the homeomorphism 
$g \colon E \to \tilde E$ maps each $C_i$ homeomorphically to $\tilde C_i$ and fixes $C$ pointwise.
Moreover, $\tilde E \cap D = C$ is connected.}
\label{fig:genjordanproof}
\end{figure}

By Schoenflies' theorem, for each $i$ there exists a homeomorphism $g_i \colon C_i \to \tilde C_i$ and we can assume
that $g_i$ is the identity on $\gamma_i$.
Noting that $E = \left(\bigcup_i C_i\right) \cup C$, let $\tilde E = \left(\bigcup_i \tilde C_i\right) \cup C$ 
and define a homeomorphism $g \colon E \to \tilde E$ by
gluing together the identity on $C$ and the homeomorphisms $g_i$:
\begin{equation*}
 g(y) = \begin{cases} y &\mbox{if } y \in C, \\ g_i(y) &\mbox{if } y \in C_i. \end{cases}
\end{equation*}

Finally, define the homeomorphism $\tilde f = g \circ f \colon D \to \tilde E$.
By construction, $\tilde f$ and $f$ coincide on $f^{-1}(C)$, 
and if $f$ has a periodic orbit in $C$, so does $\tilde f$. 
Since $\tilde f(D) \cap D = \tilde E \cap D = C$ is connected, we can apply Theorem \ref{thm:conn_jordan} to get that $\tilde f$
has a fixed point in $C$. Hence $f$ has a fixed point in $C$. 
\end{proof}

We formulate a slightly stronger result as a corollary to the proof of Theorem \ref{thm:jordan_general}:

\begin{corollary}\label{cor:jordan_general_rec}
Let $f \colon D \to E$ be as in Theorem \ref{thm:jordan_general} and 
$C$ a connected component of $D \cap E$. 
If $f$ has no fixed point in $C$, then the \textit{non-escaping set} of $f$ in $C$ is empty:
\begin{equation*}
  \{x \in C \colon f^n(x) \in C ~\forall n \in \N \} = \emptyset.
\end{equation*}
\end{corollary}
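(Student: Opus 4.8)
The plan is to re-run the two-step reduction behind Theorem \ref{thm:jordan_general} — first to the connected case of Theorem \ref{thm:conn_jordan}, then through Proposition \ref{prop:extension} to a fixed point free homeomorphism of the whole plane — and, at the last step, to invoke in place of Theorem \ref{thm:franks} the well-known strengthening that a fixed point free orientation-preserving homeomorphism of $\R^2$ has empty non-wandering set (equivalently, that each of its forward orbits leaves every compact set); this is the kind of ``weaker recurrence'' statement alluded to after Theorem \ref{thm:franks}, and it follows for instance from the periodic disk chain version in \cite{Barge1993}.

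So suppose, for a contradiction, that $f$ has no fixed point in $C$ yet some $x \in C$ satisfies $f^n(x) \in C$ for every $n \in \N$. Exactly as in the proof of Theorem \ref{thm:jordan_general}, build $\tilde E = \left(\bigcup_i \tilde C_i\right) \cup C$ and the homeomorphism $g \colon E \to \tilde E$ that is the identity on $C$ and carries each component $C_i$ of $\overline{E \setminus C}$ onto a disk $\tilde C_i$, with the $\tilde C_i$ pairwise disjoint and $\tilde C_i \cap D = \gamma_i := C \cap C_i \subset \partial D$; put $\tilde f = g \circ f \colon D \to \tilde E$, an orientation-preserving homeomorphism with $D \cap \tilde E = C$ connected. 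Every fixed point of $\tilde f$ lies in $D \cap \tilde E = C$; and a point $y \in C$ with $\tilde f(y) = y$ must have $f(y) \in C$ (otherwise $f(y) \in C_i \setminus \gamma_i$ for some $i$, whence $g(f(y)) = g_i(f(y)) \in \tilde C_i \setminus D$, contradicting $y \in C \subset D$), so that $\tilde f(y) = g(f(y)) = f(y) = y$. As $f$ has no fixed point in $C$, this shows $\tilde f$ is fixed point free on all of $D$, and Proposition \ref{prop:extension} provides a fixed point free orientation-preserving homeomorphism $F \colon \R^2 \to \R^2$ with $F\vert_D \equiv \tilde f$.

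Finally, since $f^n(x) \in C$ for every $n \geq 0$ and $g$ is the identity on $C$, a straightforward induction gives $F^n(x) = \tilde f^n(x) = f^n(x)$ for all $n \geq 0$, so the forward $F$-orbit of $x$ is contained in the compact set $C$ and hence has an accumulation point $z$. Then any neighbourhood $U$ of $z$ contains $F^m(x)$ and $F^n(x)$ for arbitrarily large $m < n$, so $F^{n-m}(U) \cap U \neq \emptyset$ with $n-m$ unbounded, i.e., $z$ is non-wandering for $F$ — contradicting the classical fact cited above, and completing the proof. The only steps requiring any care are the bookkeeping verification that $\tilde f$ is fixed point free on the whole disk $D$ (so that Proposition \ref{prop:extension} genuinely applies) and the elementary observation that a forward orbit confined to a compact set produces a non-wandering point; neither is a real obstacle, the substance being entirely contained in Proposition \ref{prop:extension} together with the cited properties of Brouwer homeomorphisms.
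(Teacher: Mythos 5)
Your argument is correct and follows essentially the same route as the paper: the same reduction via $\tilde f = g\circ f$ with $D\cap\tilde E = C$, the same verification that $\tilde f$ is fixed point free so that Proposition \ref{prop:extension} yields a Brouwer homeomorphism $F$ extending it, and the same contradiction from an accumulation point of the confined orbit. The only (immaterial) difference is the final black box: the paper invokes Franks' free-disk property \cite[Corollary 1.3]{Franks2008}, while you cite the equivalent classical fact that a fixed point free orientation-preserving plane homeomorphism has empty non-wandering set, derived from the periodic disk chain result of \cite{Barge1993}.
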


\begin{proof}
By repeating verbatim the argument in the proof of Theorem \ref{thm:jordan_general}, 
we can construct a homeomorphism $\tilde f \colon D \to \tilde E$, which coincides with $f$ on $C$ 
and such that $\tilde E \cap D = C$. 

Observe that $\tilde f$ is fixed point free, so we can apply 
Proposition \ref{prop:extension} to get an extension of $\tilde f$ to a 
fixed point free orientation-preserving homeomorphism $F \colon \R^2 \to \R^2$. 
By \cite[Corollary 1.3]{Franks2008}, $F$ is a so-called \textit{free} homeomorphism,
i.e., for any topological disk $U \subset \R^2$ we have 
\begin{equation}\label{eq:freedisk}
F(U) \cap U = \emptyset \Rightarrow F^i(U) \cap F^j(U) = \emptyset ~ \text{ whenever }~ i \neq j.
\end{equation}

Suppose $f$ has a point $x \in C$ with $f^n(x) \in C$ for all $n \in \N$. 
Then $f^n(x) = F^n(x)$ for all $n \in \N$ and the forward orbit of $x$
has an accumulation point, $F^{n_k}(x) \to x_0 \in C$ as $k \to \infty$. 
A sufficiently small neighbourhood $U$ of $x_0$ then satisfies
$F(U) \cap U = \emptyset$ but clearly $F^{n_{k+1}-n_k}(U) \cap U \neq \emptyset$ for sufficiently large $k$, contradicting (\ref{eq:freedisk}). 
Hence such point never escaping $C$ does not exist. 
\end{proof}

\section{Non-self Maps for Compact Simply Connected Planar Sets}
\label{sec:gen_sets}

In this section we will generalize the results of the previous section to non-self maps of
compact, simply connected, locally connected sets in the real plane 
(also known as \textit{nonseparating Peano continua}). 
Note that in our terminology simply connected always implies connected. 

We denote the Riemann sphere by $\RS$. If $X \subset \RS$ is compact, connected and nonseparating, 
then its complement $U = \RS \setminus X$ is simply connected 
(an open set $U \subset \RS$ is simply connected if and only if both $U$ and $\RS \setminus U$ are connected). 

The classical Riemann mapping theorem states that if $U \subsetneq \C$ is non-empty, simply connected and open,
then there exists a biholomorphic map from $U$ onto the open unit disk $\D = \{z \in \C \colon \| z \| < 1 \}$, 
known as the \textit{Riemann map}. 
We will make use of the following stronger result by Carath\'eodory (see \cite[Theorem 17.14]{Milnor2006}).

\begin{exttheorem}[Carath\'eodory's Theorem]\label{thm:caratheodory}
 If $U \subsetneq \RS$ is non-empty, simply connected and open, 
$\RS \setminus U$ has at least two points, 
and additionally $\partial U$ (or $\RS \setminus U$) is locally connected, 
then the inverse of the Riemann map, $\phi \colon \D \to U$, extends continuously 
to a map from the closed unit disk $\overline \D$ onto $\overline U$.
\end{exttheorem}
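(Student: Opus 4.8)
The plan is to prove this classical statement by the standard crosscut plus length--area argument. First I would normalize the situation: since $U \subsetneq \RS$ is open and simply connected with $\RS \setminus U$ containing at least two points, after post-composing the Riemann map with a M\"obius transformation I may assume $\infty \notin \overline U$, so that $U$ is a bounded simply connected domain in $\C$ and $\phi \colon \D \to U$ is a bounded univalent holomorphic map. The goal then becomes to show that $\phi$ admits a continuous extension $\overline\phi \colon \overline\D \to \overline U$; surjectivity onto $\overline U$ is then automatic, since $\overline\phi(\overline\D)$ is compact, contains $U$, and is contained in $\overline U$. I would also record the standard reduction that local connectivity of $\partial U$ and of $\RS \setminus U$ are equivalent here, so it suffices to assume $\partial U$ is locally connected.

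The core of the argument is a local statement at a fixed $\zeta \in \partial\D$: the limit of $\phi(z)$ as $z \to \zeta$ within $\D$ exists. For small $r>0$ let $\sigma_r = \{ z \in \D \colon |z-\zeta| = r\}$, a crosscut of $\D$, and let $\ell(r)$ be the Euclidean length of $\phi(\sigma_r)$. A Cauchy--Schwarz estimate in polar coordinates centred at $\zeta$ gives $\int_{\rho}^{2\rho} \frac{\ell(r)^2}{r}\,dr \le C\cdot \operatorname{Area}\!\big(\phi(\D \cap \{\rho < |z-\zeta| < 2\rho\})\big)$, and since $\phi$ is univalent and $U$ has finite area the right-hand side tends to $0$ as $\rho \to 0$. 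Because $\int_{\rho}^{2\rho} \frac{dr}{r} = \log 2$, the mean-value principle produces, for each $\rho$, a radius $r(\rho) \in (\rho, 2\rho)$ with $\ell(r(\rho)) \to 0$ as $\rho \to 0$. Picking $\rho = \rho_n \to 0$ yields radii $r_n \to 0$ with $\ell(r_n) \to 0$; since $\phi(\sigma_{r_n})$ is rectifiable it extends continuously to its two endpoints $a_n, b_n \in \partial U$, and $|a_n - b_n| \le \ell(r_n) \to 0$.

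Next I would invoke local connectivity of $\partial U$. The crosscut $\sigma_{r_n}$ divides $\D$ into two pieces; let $W_n \subset \D$ be the one whose closure contains $\zeta$, and $V_n = \phi(W_n)$, a component of $U \setminus \phi(\sigma_{r_n})$. Using uniform local connectivity of the compact continuum $\partial U$ (any two points of $\partial U$ at distance $<\delta$ lie in a connected subset of $\partial U$ of diameter $<\varepsilon$), one shows that the boundary portion $\partial V_n \cap \partial U$ has diameter tending to $0$, hence $\operatorname{diam}(V_n) \le \ell(r_n) + \operatorname{diam}(\partial V_n \cap \partial U) \to 0$. Since $\bigcap_n \overline{W_n} = \{\zeta\}$, the sets $\phi(W_n) = V_n$ form the images of a neighbourhood basis of $\zeta$ and shrink to a point, so $\lim_{z\to\zeta}\phi(z)$ exists. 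Finally, a routine argument shows these boundary limits fit together into a continuous map $\overline\phi$ on $\overline\D$: the modulus of continuity estimate obtained above is uniform in $\zeta$ (the length--area bound did not depend on the choice of $\zeta$), which gives equicontinuity and hence a genuine continuous extension.

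The step I expect to be the main obstacle is precisely the passage from ``the crosscut images $\phi(\sigma_{r_n})$ are short'' to ``the cut-off regions $V_n$ are small.'' This is the only place local connectivity is genuinely used, and it must be used carefully: without it the endpoints $a_n, b_n$ can be close in the plane while every connected subset of $\partial U$ joining them is large, so the region $V_n$ need not shrink. One must also be careful to select the correct complementary component (the one accumulating at $\zeta$), to check that $\sigma_{r_n}$ really is a crosscut of $\D$ for large $n$, and to justify that $\partial V_n \cap \partial U$ is controlled by a small connected piece of $\partial U$ rather than just being contained in $B(a_n,\varepsilon)$ as a set. The length--area inequality itself, and the final equicontinuity bookkeeping, are standard once the framework is in place.
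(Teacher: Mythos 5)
This is a classical result which the paper does not prove at all; it simply cites Milnor \cite[Theorem 17.14]{Milnor2006}. So the only question is whether your outline of the classical crosscut/length--area proof is sound, and it has one genuine flaw, right at the start: the normalization. You claim that after a M\"obius change of coordinates you may assume $\infty \notin \overline U$, so that $U$ is a bounded plane domain of finite Euclidean area. This is impossible whenever $\RS \setminus U$ has empty interior, because then $\overline U = \RS$ and no point of the sphere lies outside $\overline U$. This is not a fringe case -- it is exactly the case the paper needs: there $U = \RS \setminus X$ with $X$ a nonseparating Peano continuum that may well be an arc or a dendrite (the paper even normalizes $\phi(0)=\infty$), e.g.\ $U = \RS \setminus [0,1]$. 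For such $U$ the Euclidean area of $U \cap \C$ is infinite, so the inequality $\int_\rho^{2\rho} \ell(r)^2\,dr/r \le C\cdot\operatorname{Area}(\cdots)$ no longer produces short crosscut images, and your argument stops at the first step. The standard repair, and the one implicit in Milnor's treatment, is to run the length--area estimate in the spherical metric: since $\phi$ is injective into $\RS$, the spherical area of the image is bounded by the (finite) area of the sphere, one gets radii $r_n \to 0$ whose image crosscuts have small \emph{spherical} length, and all subsequent distance and diameter estimates (closeness of the endpoints $a_n,b_n$, uniform local connectivity of $\partial U$, shrinking of $V_n$) should be carried out with spherical distance. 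With that change your outline becomes the standard proof.

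Two smaller points. First, the equivalence of local connectivity of $\partial U$ and of $\RS\setminus U$ is true but is itself a (short) classical argument, not a triviality; since you only use one direction, it is cleaner to just assume whichever hypothesis you actually invoke. Second, you correctly identify the delicate step -- passing from short crosscut images $\phi(\sigma_{r_n})$ to small cut-off regions $V_n$, which is where local connectivity (in its uniform form) enters and where one must argue that $V_n$ is the ``small'' complementary piece rather than merely that its boundary meets a small ball; your sketch of this step is the right one, but it is the part that would need to be written out in full detail if the proof were to be included rather than cited.
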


As in the case of Jordan domains, we will need a requirement on the homeomorphisms to be orientation-preserving. 
To make sense of this notion for more general subsets of the plane, we cite the following result, 
simplified by our assumption of a 
nonseparating and locally connected set (see also Oversteegen and Tymchatyn \cite{Oversteegen2010}).

\begin{exttheorem}[Oversteegen and Valkenburg \cite{Oversteegen2011}]\label{thm:orpres}
 Let $X \subset \RS$ be compact, simply connected and locally connected, and $f \colon X \to Y \subset \RS$ a homeomorphism.
Then the following are equivalent:
\begin{enumerate}
 \item $f$ is isotopic to the identity on $X$;
 \item there exists an isotopy $F \colon \RS \times [0,1] \to \RS$ such that 
$F^0 = \operatorname{id}_\RS$ and $F^1\vert_X = f$;
 \item $f$ extends to an orientation-preserving homeomorphism of $\RS$;
 \item if $U = \RS \setminus X$ and $V = \RS \setminus Y$, 
then $f$ induces a homeomorphism $\hat f$ 
from the prime end circle of $U$ to the prime end circle of $V$ which preserves the circular order. 
\end{enumerate}
\end{exttheorem}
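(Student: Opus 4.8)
The plan is to close the cycle of implications $(3)\Rightarrow(2)\Rightarrow(1)\Rightarrow(4)\Rightarrow(3)$, and additionally to record the direct implication $(3)\Rightarrow(4)$. The bulk of the work lies in $(4)\Rightarrow(3)$ and, above all, in $(1)\Rightarrow(4)$; the remaining implications are soft.

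First the soft ones. $(2)\Rightarrow(1)$ is immediate: if $F\colon\RS\times[0,1]\to\RS$ is an isotopy with $F^0=\operatorname{id}_\RS$ and $F^1|_X=f$, then $t\mapsto F^t|_X$ is an isotopy on $X$ from the inclusion to $f$. For $(3)\Rightarrow(2)$ I would invoke the classical fact that $\operatorname{Homeo}^+(\RS)$ is path-connected (e.g.\ isotope an orientation-preserving $g$ so that it fixes a point, and then apply the corresponding statement for orientation-preserving homeomorphisms of $\R^2$); a path from $\operatorname{id}_\RS$ to the given extension $g$ of $f$ is then the required $F$. For $(3)\Rightarrow(4)$: an orientation-preserving extension $g$ of $f$ maps $U=\RS\setminus X$ homeomorphically onto $V=\RS\setminus Y$ and hence induces a homeomorphism of the prime end circles; being orientation-preserving, it respects the circular order, and the induced map depends only on the behaviour of $g$ along $\partial U$, i.e.\ only on $f$. (Here I use that $\partial U=\partial X$ and $\partial V=\partial Y$, since $\overline U=\RS\setminus\inter X$ and thus $X\cap\overline U=\partial X$, $X\cup\overline U=\RS$.)

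Next, $(4)\Rightarrow(3)$, where Carath\'eodory's Theorem (Theorem~\ref{thm:caratheodory}) is the tool. Let $\phi_U\colon\D\to U$ and $\phi_V\colon\D\to V$ be inverse Riemann maps; since $\partial U$ and $\partial V$ are locally connected, they extend to continuous surjections $\overline{\phi_U}\colon\overline\D\to\overline U$ and $\overline{\phi_V}\colon\overline\D\to\overline V$, which identify $\partial\D$ with the respective prime end circles. By hypothesis $f$ induces an orientation-preserving circle homeomorphism $\hat f\colon\partial\D\to\partial\D$; extend it to an orientation-preserving homeomorphism $\Phi\colon\overline\D\to\overline\D$ (coning, or the Alexander extension), and set $g=\overline{\phi_V}\circ\Phi\circ\overline{\phi_U}^{-1}$ on $\overline U$. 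The point requiring care is that this be well defined: two points of $\partial\D$ are identified by $\overline{\phi_U}$ exactly when they name prime ends of $U$ with the same principal point approached from the ``same side'', and the fact that $\hat f$ is the prime-end map of the genuine boundary homeomorphism $f|_{\partial X}\colon\partial X\to\partial Y$ forces $\Phi$ to respect these identifications and the analogous ones for $V$; one also checks $g|_{\partial X}=f|_{\partial X}$. Thus $g\colon\overline U\to\overline V$ is a homeomorphism agreeing with $f$ on $\partial X=X\cap\overline U$, so pasting $f$ on $X$ and $g$ on $\overline U$ yields a continuous bijection $F\colon\RS\to\RS$, hence a homeomorphism of the sphere; it is orientation-preserving because $g|_U=\phi_V\circ\Phi\circ\phi_U^{-1}$ is a composition of orientation-preserving maps (and, where $\inter X\neq\emptyset$, $F$ agrees there with $f$). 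This gives $(3)$.

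Finally $(1)\Rightarrow(4)$, the main obstacle. Given an isotopy $h_t\colon X\to\RS$ with $h_0=\operatorname{id}_X$ and $h_1=f$, each $X_t=h_t(X)$ is a nonseparating Peano continuum, so $U_t=\RS\setminus X_t$ is simply connected with locally connected boundary and carries a prime end circle $\mathcal P(U_t)$ with a circular order. The plan is to follow the family $t\mapsto\mathcal P(U_t)$ continuously and thereby produce an order-preserving identification $\mathcal P(U_0)\cong\mathcal P(U_1)$ realizing $\hat f$. The steps: (a) show that an embedding of $X$ into $\RS$ which is $C^0$-close to the inclusion induces, via Carath\'eodory together with a Carath\'eodory-kernel (normal families) argument for the Riemann maps $\phi_{U_t}$, a well-defined homeomorphism of prime end circles $C^0$-close to the identity, in particular orientation-preserving; (b) use uniform continuity of the isotopy on $[0,1]$ to partition the parameter interval finely enough that (a) applies on each subinterval, and compose the resulting prime-end homeomorphisms; (c) check that the composite is independent of the partition, preserves the circular order (each factor does, and the orientation of a circle homeomorphism is a locally constant invariant), and reduces to $\hat f$ under the fixed Carath\'eodory identifications of $\partial\D$ with $\mathcal P(U_0)$ and $\mathcal P(U_1)$. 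The delicate point, and the one demanding the most care, is (a): the isotopy acts only on $X$ and not on the complementary domains, so the induced prime-end map must be extracted from the boundary behaviour alone, using local connectivity and the stability of accessibility under small perturbations of the embedding (``crosscuts can be pushed over''); this is precisely where the hypotheses on $X$ are indispensable. With the cycle closed, the four conditions are equivalent.
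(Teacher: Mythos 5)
This statement is not proved in the paper at all: it is an external result quoted from Oversteegen and Valkenburg \cite{Oversteegen2011} (the paper only uses it, via Carath\'eodory's theorem and the induced prime-end map, in the proof of Theorem \ref{thm:main_thm}), so there is no in-paper argument to compare yours against. Judged on its own, your outline handles the soft implications correctly, and your $(4)\Rightarrow(3)$ is essentially sound: the cleanest way to dispose of your ``well-definedness'' worry is to note that if $\overline{\phi_U}(s)=\overline{\phi_U}(s')$ then the defining relation $\psi\vert_{S^1}\circ\hat f=f\vert_{\partial U}\circ\phi\vert_{S^1}$ forces $\overline{\phi_V}(\Phi(s))=\overline{\phi_V}(\Phi(s'))$, and since $\overline{\phi_U}$ is a quotient map (continuous surjection between compacta), $g$ descends continuously to $\overline U$; pasting with $f$ and checking bijectivity then gives the extension.

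The genuine gap is $(1)\Rightarrow(4)$, which you yourself identify as the main obstacle but do not actually prove. Step (a) of your plan asserts that an embedding $C^0$-close to the inclusion ``induces, via Carath\'eodory together with a kernel argument, a well-defined homeomorphism of prime end circles close to the identity.'' The Carath\'eodory kernel theorem (plus uniform local connectedness, giving equicontinuity of the extended Riemann maps $\overline{\phi_{U_t}}$) only yields convergence of the maps $\overline{\phi_{U_{t'}}}\to\overline{\phi_{U_t}}$, i.e.\ a comparison of the \emph{domains} under the tautological identification $s\mapsto s$ of $S^1$ with itself; it does not produce a circle map that is \emph{induced by the boundary homeomorphism} $h_{t'}\circ h_t^{-1}$ in the required sense $\psi\circ\hat f=f\circ\phi$ on $S^1$, because there is no reason that $\overline{\phi_{U_{t'}}}(s)$ should equal $h_{t'}h_t^{-1}\bigl(\overline{\phi_{U_t}}(s)\bigr)$, and because the fibers of the Carath\'eodory extensions over a boundary point are in general nondegenerate, so the relation alone does not even single out a candidate map (for $f=\operatorname{id}$ on an arc, both the identity and the ``side-swapping'' involution satisfy it). Producing a well-defined, monotone, order-preserving $\hat f$ from the isotopy is exactly the hard content of the Oversteegen--Valkenburg theorem; your steps (a)--(c) are a programme for it (crosscut-pushing, subdivision, independence of the partition), but none of these is carried out, so the cycle of implications is not closed and the proposal does not constitute a proof.
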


To explain assertion (4) in the above theorem, let $\phi \colon \overline \D \to \overline U$ 
and $\psi \colon \overline \D \to \overline V$ be the extended inverse Riemann maps given by Carath\'eodory's theorem, 
and denote $S^1 = \partial \D$. Then the homeomorphism $\hat f \colon S^1 \to S^1$ 
is said to be \textit{induced by $f$}, if 
\begin{equation*}
\psi\vert_{S^1} \circ \hat f = f\vert_{\partial U} \circ \phi\vert_{S^1}.
\end{equation*}
More details can be found in \cite{Oversteegen2011}, 
for an introduction to Carath\'eodory's theory of prime ends 
the reader is referred to Milnor's book \cite[Chapter 17]{Milnor2006}. 

Furthermore, we will make use of the following notation: 
For $\varepsilon > 0$, denote the closed $\varepsilon$-neighbourhood of a set $X \subset \R^2$ by
\begin{equation*}
 X_\varepsilon = \{ z \in \R^2 \colon \inf_{x \in X} \| x - z \| \leq \varepsilon\}. 
\end{equation*}
In the proof of our main theorem, we will consider $\varepsilon$-neighbourhoods of 
disconnected compact sets. A key fact we need is that any two given connected components of 
such a set are separated by the $\varepsilon$-neighbourhood of the set, if $\varepsilon > 0$
is chosen sufficiently small. This is the following technical lemma, whose proof can be found
in the appendix.
\begin{lemma}\label{lem:separation_lemma}
 Let $X$ be a compact subset of $\R^n, n \in \N$. 
Let $\mathcal{C} = \{ C_i \}_{i \in I}$ be the collection 
of connected components of $X$ and assume $|\mathcal{C}| \geq 2$.
Let $C, C' \in \mathcal{C}$ be two distinct connected components. 
Then for $\varepsilon > 0$ sufficiently small, $C$ and $C'$ 
lie in two distinct connected components of $X_\varepsilon$.
\end{lemma}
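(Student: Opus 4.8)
The plan is to reduce the statement to a separation of $X$ itself by disjoint relatively clopen sets, and then pass to the $\varepsilon$-thickening.

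\textbf{Step 1 (separating $C$ from $C'$ inside $X$).} Since $X$ is a compact metric space, hence compact Hausdorff, connected components coincide with quasi-components; in particular $C$ equals the intersection of all relatively clopen subsets of $X$ containing it. As $C'$ is a component distinct from $C$, it is not contained in $C$, so there is a relatively clopen set $A \subseteq X$ with $C \subseteq A$ and $C' \nsubseteq A$. Put $B = X \setminus A$, also relatively clopen. Since $C'$ is connected and meets $B$, we get $C' \subseteq B$. Thus $X = A \sqcup B$ is a partition into two disjoint, nonempty, relatively closed (in $X$) sets with $C \subseteq A$ and $C' \subseteq B$. I expect this to be the only step carrying real content; it is exactly the classical fact that components and quasi-components agree in compact Hausdorff spaces.

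\textbf{Step 2 (a positive gap).} The sets $A$ and $B$ are closed subsets of the compact set $X$, hence compact, and they are disjoint. Therefore $\delta := \operatorname{dist}(A,B) = \inf\{ \|a - b\| : a \in A,\, b \in B \} > 0$, with the infimum attained. Fix any $\varepsilon$ with $0 < \varepsilon < \delta/2$.

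\textbf{Step 3 (thickening).} I claim $X_\varepsilon = A_\varepsilon \cup B_\varepsilon$ and $A_\varepsilon \cap B_\varepsilon = \emptyset$. Since $A, B \subseteq X$, we have $A_\varepsilon \cup B_\varepsilon \subseteq X_\varepsilon$; conversely, for $z \in X_\varepsilon$ the continuous function $x \mapsto \|x - z\|$ attains its minimum on the compact set $X$ at some $x_0$, with $\|x_0 - z\| \leq \varepsilon$, and $x_0 \in A$ or $x_0 \in B$, so $z \in A_\varepsilon \cup B_\varepsilon$. If some $z$ lay in $A_\varepsilon \cap B_\varepsilon$, there would be $a \in A$, $b \in B$ with $\|a - z\| \leq \varepsilon$ and $\|b - z\| \leq \varepsilon$, giving $\|a - b\| \leq 2\varepsilon < \delta$, a contradiction. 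Hence $A_\varepsilon$ and $B_\varepsilon$ are disjoint closed sets whose union is $X_\varepsilon$, so each is clopen in $X_\varepsilon$. Since $C \subseteq A \subseteq A_\varepsilon$ and $C' \subseteq B \subseteq B_\varepsilon$, the components $C$ and $C'$ lie in different clopen pieces of $X_\varepsilon$, hence in distinct connected components of $X_\varepsilon$, as required.

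The main obstacle is Step 1, which relies on the equality of components and quasi-components for compact Hausdorff spaces; once the relatively clopen partition $X = A \sqcup B$ is in hand, Steps 2 and 3 are a routine compactness-and-thickening argument.
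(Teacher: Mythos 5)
Your proof is correct, but it takes a genuinely different route from the paper. You separate $C$ from $C'$ by a clopen partition $X = A \sqcup B$ (using the classical fact that components coincide with quasi-components in compact Hausdorff spaces), extract a positive gap $\delta = \operatorname{dist}(A,B) > 0$ by compactness, and then observe that for $\varepsilon < \delta/2$ the thickenings $A_\varepsilon$ and $B_\varepsilon$ are disjoint closed sets covering $X_\varepsilon$, hence clopen in $X_\varepsilon$, which forces $C$ and $C'$ into different components of $X_\varepsilon$. The paper instead argues by contradiction in the hyperspace of continua: if $C$ and $C'$ lay in a single component $C_k$ of $X_{1/k}$ for every $k$, then compactness of the space of subcontinua in the Hausdorff metric yields a convergent subsequence $C_k \to C_0$, where $C_0$ is a continuum with $C \cup C' \subseteq C_0 \subseteq X$, contradicting that $C$ and $C'$ are distinct components of $X$. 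Your argument is more elementary and self-contained, avoiding the Blaschke-type compactness of the hyperspace, and it is quantitative in that it produces an explicit admissible threshold $\varepsilon < \operatorname{dist}(A,B)/2$ (and the conclusion persists for all smaller $\varepsilon$ since $X_{\varepsilon'} \subseteq X_\varepsilon$); the paper's argument is shorter once the cited hyperspace fact is granted, at the cost of being nonconstructive. The only step of yours carrying real content is indeed the components-equal-quasi-components fact, which is standard for compact Hausdorff spaces, so the proof stands.
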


We are now ready to prove our main results. 

\begin{proof}[Proof of Theorem \ref{thm:main_thm}]
Let $C$ be a connected component of $X \cap Y$ which contains a periodic orbit of $f$. 
We will prove that $f$ has a fixed point in $C$.
The strategy of the proof is to construct Jordan-domain neighbourhoods of $X$ and $Y$ 
and to extend $f$ to a homeomorphism between these neighbourhoods, so that 
Theorem \ref{thm:jordan_general} can be applied to the extension. The existence 
of a fixed point for the extension will then be shown to imply the existence of a fixed 
point for $f$ in $C$. 

For fixed $\varepsilon > 0$, we first construct a closed neighbourhood 
$N^\varepsilon$ of $X$ with boundary $\Gamma = \partial N^\varepsilon$ 
a Jordan curve such that $\overline{N^\varepsilon \setminus X}$ 
has a foliation $\{ \gamma_z \}_{ z \in \Gamma}$ with the following properties: 
\begin{itemize}
\item each $\gamma_z$ is an arc connecting the point $z \in \Gamma$ with a point $x(z) \in \partial X$;
\item $\gamma_z \cap \partial X = \{x(z)\}$ and $\gamma_z \cap \Gamma = \{z\}$ for each $z \in \Gamma$;
\item $\bigcup_{z \in \Gamma} \gamma_z = \overline{N^\varepsilon \setminus X}$;
\item if $z \neq z'$, then either $\gamma_z \cap \gamma_{z'} = \emptyset$, 
or $x(z) = x(z')$ and $\gamma_z \cap \gamma_{z'} = \{x(z)\} \subset \partial X$;
\item each arc $\gamma_z$ lies within an $\varepsilon$-ball 
centered at its basepoint $x(z)\in \partial X$: $\gamma_z \subset B_\varepsilon(x(z))$.  
\end{itemize}
Observe that by the last property, $N^\varepsilon$ is included in the
$\varepsilon$-neighbourhood $X_\varepsilon$ of $X$.

We identify $\R^2 = \C$ and $\C \subset \RS$ in the usual way and denote $U = \RS \setminus X$. 
Then $U$ satisfies the hypotheses of Theorem \ref{thm:caratheodory}, so there exists
a continuous map $\phi \colon \overline \D \to \overline U$ (the extended inverse Riemann map),
 whose restriction to the open unit disk $\D$ is a conformal
homeomorphism from $\D$ to $U$ and $\phi(\partial \D)  = \partial U$. 
The map $\phi$ can be chosen such that $\phi(0) = \infty$.
Since $\overline \D$ is compact, $\phi$ is uniformly continuous, 
so we can find $\delta = \delta(\varepsilon) > 0$ such that for all $x,y \in \overline \D$ with 
$\| x - y \| < \delta$, $\|\phi(x) - \phi(y)\| < \varepsilon$.

Assume without loss of generality that $\delta < 1$ and set 
$A_\delta = \{ x \in \C \colon 1-\delta \leq \|x\| \leq 1 \}$ 
and $N^\varepsilon = \phi(A_\delta) \cup X$. 
Then $N^\varepsilon$ is a closed neighbourhood of $X$, whose boundary
$\Gamma = \partial N^\varepsilon = \phi(\{ x \in \D \colon \|x\| = 1-\delta \})$ is a Jordan curve.
We can then construct the foliation $\{\gamma_z \}_{ z \in \Gamma}$ 
of $\overline{N^\varepsilon \setminus X}$ 
by taking the images of radial lines in $A_\delta$ (see Fig.\ref{fig:foliation}):
\begin{equation*}
 \gamma_z = \phi(\{ r \cdot v \colon 1-\delta \leq r \leq 1 \}), \quad\text{ where } v = \frac{\phi^{-1}z}{\|\phi^{-1}z\|}. 
\end{equation*}

\begin{figure}
 \centering
\includegraphics[width=\textwidth]{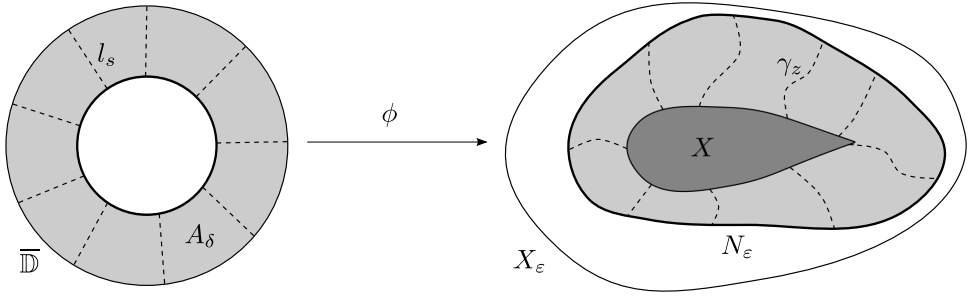}
 \caption{(Proof of Theorem \ref{thm:main_thm}) The extended inverse Riemann map $\phi \colon \overline{\D} \to \hat C \setminus \inter(X)$
maps the annulus $A_\delta = \{ x \in \C \colon 1-\delta \leq \|x\| \leq 1 \}$ onto $N^\varepsilon \setminus \inter(X)$
such that $\partial \D$ is mapped onto $\partial X$. 
Each leaf $\gamma_z$ of the foliation of $N^\varepsilon \setminus \inter(X)$ is the image under $\phi$ 
of a radial line segment $l_s \subset A_\delta$.}
 \label{fig:foliation}
\end{figure}

All the required properties of $N^\varepsilon$ and its foliation $\{\gamma_z\}$ then follow because 
$\phi$ maps $A_\delta \setminus \partial \D$ bijectively and uniformly continuously onto $N^\varepsilon \setminus X$; 
in particular the last required property follows by the choice of $\delta = \delta(\varepsilon)$.

Next, since $f$ is a homeomorphism, $Y = f(X)$ is simply connected, 
compact and also locally connected (see \cite[Theorem 17.15]{Milnor2006}). 
Hence, with $V = \RS \setminus Y$ and 
$\phi \colon \overline \D \to \overline V$ the corresponding 
extended inverse Riemann mapping, we can repeat the same construction as before 
to obtain a closed neighbourhood $M^\varepsilon$ of $Y$, such 
that $\Theta = \partial M^\varepsilon$ is a Jordan curve 
and $\{\theta_z \}$ is a foliation of 
$\overline{M^\varepsilon \setminus X}$ with the same properties as $\{\gamma_z\}$.

By choosing $\delta > 0$ small enough such that both 
constructions work with this given value, we get that
$N^\varepsilon = X \cup \phi(A_\delta)$ and $M^\varepsilon = Y \cup \psi(A_\delta)$. 
Denote by $\{l_s \colon s \in S^1\}$ the foliation of $A_\delta$ by radial lines such that 
$\gamma_z = \phi(l_s)$ for $z = \phi( (1-\delta)\cdot s)$ 
and $\theta_{z'} = \psi(l_s)$ for $z '= \psi( (1-\delta)\cdot s)$.

From Theorem \ref{thm:orpres} we get that $f$ induces a homeomorphism $\hat f \colon S^1 \to S^1$ such that
\begin{equation*}
\psi\vert_{S^1} \circ \hat f = f\vert_{\partial U} \circ \phi\vert_{S^1}.
\end{equation*}
Hence we can extend $f \colon X \to Y$ to $F_\varepsilon \colon N^\varepsilon \to M^\varepsilon$ by mapping each arc 
$\gamma_z \subset \overline{N^\varepsilon \setminus X}$ 
to the corresponding arc $\theta_{z'} \subset \overline{M^\varepsilon \setminus X}$.
More precisely, let $H\colon A_\delta \to A_\delta$ be the homeomorphism
which maps the radial line segment $l_s$ linearly to the radial line segment $l_{\hat f(s)}$
(i.e., in polar coordinates $H = \operatorname{id}\vert_{[1-\delta,1]} \times \hat f$).
Then define the homeomorphic extension of $f$ to $N^\varepsilon$ by setting
\begin{equation*}
 F_\varepsilon(x) = \begin{cases} f(x) &\mbox{if } x \in X, \\ 
\psi \circ H \circ \phi^{-1} (x) &\mbox{if } x \in N^\varepsilon\setminus X. \end{cases}
\end{equation*}
Note that for $0 < \varepsilon' < \varepsilon$, we can repeat the above construction to obtain 
Jordan domains $N^{\varepsilon'}$ and $M^{\varepsilon'}$ and 
a homeomorphism $F_{\varepsilon'} \colon N^{\varepsilon'} \to M^{\varepsilon'}$, 
and moreover $F_\varepsilon (x) = F_{\varepsilon'}(x)$ 
for all $x \in N^{\varepsilon'} \subset N^\varepsilon$.

Every connected component of $X \cap Y$ is a subset 
of a connected component of $N^\varepsilon \cap M^\varepsilon$. 
Let $K^\varepsilon$ be the connected component of 
$N^\varepsilon \cap M^\varepsilon$ which contains $C$. 

\begin{claim} \label{cl:fp}
 $F_{\varepsilon'}$ has a fixed point in $K^{\varepsilon'}$ 
for every $\varepsilon' \in (0, \varepsilon]$.
\end{claim}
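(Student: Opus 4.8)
The plan is to apply Theorem~\ref{thm:jordan_general} to the homeomorphism $F_{\varepsilon'}\colon N^{\varepsilon'}\to M^{\varepsilon'}$ of Jordan domains, using the connected component $K^{\varepsilon'}$ of $N^{\varepsilon'}\cap M^{\varepsilon'}$ that contains $C$. By that theorem it suffices to exhibit a periodic orbit of $F_{\varepsilon'}$ inside $K^{\varepsilon'}$. By hypothesis $f$ has a periodic orbit $\mathcal O=\{x,f(x),\dots,f^{p-1}(x)\}\subset C$; since $F_{\varepsilon'}$ agrees with $f$ on $X\supset C$ and $C\subset K^{\varepsilon'}$, this same finite set is a periodic orbit of $F_{\varepsilon'}$ lying in $K^{\varepsilon'}$. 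Hence Theorem~\ref{thm:jordan_general} yields a fixed point $q$ of $F_{\varepsilon'}$ in $K^{\varepsilon'}$. It remains only to handle $\varepsilon'<\varepsilon$ uniformly, which is immediate because the construction of $N^{\varepsilon'}$, $M^{\varepsilon'}$ and $F_{\varepsilon'}$ was carried out for every $\varepsilon'\in(0,\varepsilon]$ by exactly the same procedure, so the above argument applies verbatim with $\varepsilon$ replaced by $\varepsilon'$.

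First I would verify that $F_{\varepsilon'}$ is indeed an orientation-preserving homeomorphism between Jordan domains, so that Theorem~\ref{thm:jordan_general} applies: $N^{\varepsilon'}$ and $M^{\varepsilon'}$ are Jordan domains by the construction (their boundaries $\Gamma$, $\Theta$ are Jordan curves, being $\phi$- and $\psi$-images of the circle $\{\|x\|=1-\delta\}$), and $F_{\varepsilon'}$ is a homeomorphism by the gluing of $f$ on $X$ with $\psi\circ H\circ\phi^{-1}$ on the foliated collar, the two definitions matching on $\partial X$ precisely because $\hat f$ is the homeomorphism induced by $f$, i.e.\ $\psi|_{S^1}\circ\hat f=f|_{\partial U}\circ\phi|_{S^1}$. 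Orientation-preservation follows since $f$ extends to an orientation-preserving homeomorphism of $\RS$ by Theorem~\ref{thm:orpres}, and $H$ is orientation-preserving on the annulus because $\hat f$ preserves the circular order on $S^1$.

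Second I would record that $C$, being a connected component of $X\cap Y$, is contained in a connected component of $N^{\varepsilon'}\cap M^{\varepsilon'}$ (trivially, since $X\cap Y\subset N^{\varepsilon'}\cap M^{\varepsilon'}$ and $C$ is connected), and that component is by definition $K^{\varepsilon'}$; thus $\mathcal O\subset C\subset K^{\varepsilon'}$ and the periodic orbit of $F_{\varepsilon'}$ genuinely lies in the prescribed component. Then Theorem~\ref{thm:jordan_general}, applied to the Jordan domain $N^{\varepsilon'}$, the orientation-preserving homeomorphism $F_{\varepsilon'}\colon N^{\varepsilon'}\to M^{\varepsilon'}$, and the component $K^{\varepsilon'}$ of $N^{\varepsilon'}\cap M^{\varepsilon'}$, produces a fixed point of $F_{\varepsilon'}$ in $K^{\varepsilon'}$, which is the claim.

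There is essentially no serious obstacle here: the claim is a direct corollary of Theorem~\ref{thm:jordan_general} once the setup is in place, and all the genuine work has already been done in constructing $F_{\varepsilon'}$ and the Jordan domains $N^{\varepsilon'},M^{\varepsilon'}$. The only point requiring a moment's care is bookkeeping — making sure that ``$F_{\varepsilon'}$ agrees with $f$ on $C$'' and ``$C\subset K^{\varepsilon'}$'' are both explicitly invoked so that the periodic orbit really witnesses the hypothesis of Theorem~\ref{thm:jordan_general} \emph{in the component $K^{\varepsilon'}$}, rather than merely somewhere in $N^{\varepsilon'}\cap M^{\varepsilon'}$. (The subsequent difficulty in the paper will be to pass from fixed points $q_{\varepsilon'}\in K^{\varepsilon'}$, as $\varepsilon'\to 0$, to a fixed point of $f$ in $C$ itself; but that lies beyond the present claim.)
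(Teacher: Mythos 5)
Your proof is correct and follows essentially the same route as the paper: the periodic orbit of $f$ in $C$ is a periodic orbit of $F_{\varepsilon'}$ lying in $K^{\varepsilon'}$ (since $F_{\varepsilon'}$ extends $f$ and $C\subset K^{\varepsilon'}$), and Theorem \ref{thm:jordan_general} applied to $F_{\varepsilon'}\colon N^{\varepsilon'}\to M^{\varepsilon'}$ then gives a fixed point in $K^{\varepsilon'}$. Your additional checks (that $N^{\varepsilon'},M^{\varepsilon'}$ are Jordan domains and $F_{\varepsilon'}$ is an orientation-preserving homeomorphism) are details the paper leaves implicit from its construction, but they are consistent with it.
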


This follows directly from the main result of the previous section: 
the periodic point for $f$ in $C$ is also a periodic point for 
$F_{\varepsilon'}$ in $K^{\varepsilon'}$. By Theorem \ref{thm:jordan_general}, 
$F_{\varepsilon'}$ has a fixed point in $K^{\varepsilon'}$.

\begin{claim} \label{cl:int_fp}
 $F_\varepsilon$ has a fixed point in $K^\varepsilon \cap (X \cap Y)$. 
\end{claim}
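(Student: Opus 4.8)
The plan is to pass to a limit as $\varepsilon' \to 0$ along a sequence, using Claim~\ref{cl:fp} and the compactness of $X$. First I would fix a decreasing sequence $\varepsilon_n \downarrow 0$ (with $\varepsilon_1 = \varepsilon$) and, for each $n$, pick a fixed point $p_n \in K^{\varepsilon_n}$ of $F_{\varepsilon_n}$ supplied by Claim~\ref{cl:fp}. Since $K^{\varepsilon_n} \subset N^{\varepsilon_n} \subset X_{\varepsilon_n}$, and $X$ is compact, all the $p_n$ lie in the compact set $X_{\varepsilon_1}$; passing to a subsequence we may assume $p_n \to p$ for some $p \in \R^2$. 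Moreover, since $\operatorname{dist}(p_n, X) \leq \varepsilon_n \to 0$ and $X$ is closed, the limit satisfies $p \in X$. The nesting property $F_{\varepsilon_n}(x) = F_{\varepsilon_{n+1}}(x)$ for $x \in N^{\varepsilon_{n+1}}$ means that all the maps $F_{\varepsilon_n}$ agree with $f$ on $X$, so once we know $p \in X$ we should get $f(p) = p$ essentially for free: I would argue $F_{\varepsilon_n}(p_n) = p_n$, and as $p_n \to p$ with $p \in X$, continuity of $f$ on $X$ together with the fact that $F_{\varepsilon_n}$ restricted to $N^{\varepsilon_n}$ converges appropriately to $f$ near $p$ gives $f(p) = p$. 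Finally, a fixed point of $f$ lies in $X \cap Y$, and since each $p_n \in K^{\varepsilon_n}$ and the $K^{\varepsilon_n}$ shrink down toward $C$, the limit $p$ lies in $C \subset X \cap Y$; in particular $p \in K^\varepsilon \cap (X \cap Y)$, which is what the claim asserts.

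The point requiring the most care is the assertion that $p \in C$ — equivalently, that the nested connected components $K^{\varepsilon_n}$ do not "leak" to a different component of $X \cap Y$ in the limit. The clean way to handle this is Lemma~\ref{lem:separation_lemma}: if $p$ belonged to a connected component $C' \neq C$ of $X \cap Y$, then for $\varepsilon_n$ small enough, $C$ and $C'$ lie in distinct connected components of $(X \cap Y)_{\varepsilon_n}$, hence (since $N^{\varepsilon_n} \cap M^{\varepsilon_n} \subset (X\cap Y)_{2\varepsilon_n}$, say, by the last bullet property of the foliations bounding the distance of $N^{\varepsilon_n}$ from $X$ and of $M^{\varepsilon_n}$ from $Y$) in distinct components of $N^{\varepsilon_n} \cap M^{\varepsilon_n}$ as well. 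But $p_n \to p \in C'$ forces $p_n \in C'$'s component for large $n$, while $p_n \in K^{\varepsilon_n}$, the component containing $C$ — a contradiction. One must also verify the elementary inclusion $N^{\varepsilon_n} \cap M^{\varepsilon_n} \subseteq (X \cap Y)_{c\varepsilon_n}$ for a suitable constant $c$, which follows from $N^{\varepsilon_n} \subseteq X_{\varepsilon_n}$, $M^{\varepsilon_n} \subseteq Y_{\varepsilon_n}$, and the fact that a point within $\varepsilon_n$ of both $X$ and $Y$ is within $O(\varepsilon_n)$ of $X \cap Y$ — here one uses local connectedness (actually just that $X, Y$ are nicely behaved compacta) to ensure $X_{\varepsilon} \cap Y_{\varepsilon}$ does not stay far from $X \cap Y$; alternatively one sidesteps this by working directly with the component $K^{\varepsilon_n}$ and noting it is connected, contains $C$, and is contained in $X_{\varepsilon_n}$ and $Y_{\varepsilon_n}$, so it cannot meet a far-away component of $X \cap Y$ without violating Lemma~\ref{lem:separation_lemma} applied to $X \cap Y$ once $\varepsilon_n$ is below the separation threshold.

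\begin{proof}[Proof of Claim~\ref{cl:int_fp}]
Fix a strictly decreasing sequence $(\varepsilon_n)_{n \geq 1}$ with $\varepsilon_1 = \varepsilon$ and $\varepsilon_n \to 0$. By Claim~\ref{cl:fp}, for each $n$ there is a point $p_n \in K^{\varepsilon_n}$ with $F_{\varepsilon_n}(p_n) = p_n$. Since $K^{\varepsilon_n} \subset N^{\varepsilon_n} \subset X_{\varepsilon_n} \subset X_{\varepsilon_1}$ and $X_{\varepsilon_1}$ is compact, after passing to a subsequence we may assume $p_n \to p$ for some $p \in \R^2$. As $\operatorname{dist}(p_n, X) \leq \varepsilon_n \to 0$ and $X$ is closed, $p \in X$.

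We first show $f(p) = p$. For any fixed $m$, all indices $n \geq m$ satisfy $N^{\varepsilon_n} \subset N^{\varepsilon_m}$ and $F_{\varepsilon_n} = F_{\varepsilon_m}$ on $N^{\varepsilon_n}$; in particular $F_{\varepsilon_m}(p_n) = p_n$ for all $n \geq m$. By continuity of $F_{\varepsilon_m}$ on $N^{\varepsilon_m}$ and $p_n \to p \in X \subset N^{\varepsilon_m}$, we get $F_{\varepsilon_m}(p) = p$. Since $p \in X$, $F_{\varepsilon_m}(p) = f(p)$, so $f(p) = p$.

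In particular $p$ is a fixed point of $f$, hence $p \in X \cap Y$; let $C'$ be the connected component of $X \cap Y$ containing $p$. Suppose for contradiction that $C' \neq C$. By Lemma~\ref{lem:separation_lemma} applied to the compact set $X \cap Y$, there is $\eta > 0$ such that $C$ and $C'$ lie in distinct connected components of $(X \cap Y)_\eta$. Now each $K^{\varepsilon_n}$ is connected, contains $C$, and satisfies $K^{\varepsilon_n} \subset N^{\varepsilon_n} \cap M^{\varepsilon_n} \subset X_{\varepsilon_n} \cap Y_{\varepsilon_n} \subset (X \cap Y)_{\eta}$ once $\varepsilon_n$ is small enough (using that a point lying within $\varepsilon_n$ of both $X$ and $Y$ lies within a controlled distance, tending to $0$ with $\varepsilon_n$, of $X \cap Y$, as $X$ and $Y$ are Peano continua; in any case $\overline{K^{\varepsilon_n}} \to X \cap Y$ in the Hausdorff sense need not be invoked, only the inclusion for large $n$). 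Hence for large $n$, $K^{\varepsilon_n}$ lies in the component of $(X \cap Y)_\eta$ containing $C$. But $p_n \in K^{\varepsilon_n}$ and $p_n \to p \in C'$, so for large $n$ the point $p_n$ lies in the (open) component of $(X \cap Y)_\eta$ containing $C'$, which is disjoint from the one containing $C$. This contradiction shows $C' = C$, so $p \in C \subset K^\varepsilon \cap (X \cap Y)$, completing the proof.
\end{proof}
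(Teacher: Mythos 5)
Your argument is correct, and its first half is exactly the paper's proof of Claim \ref{cl:int_fp}: take the fixed points $p_n \in K^{\varepsilon_n}$ of the nested extensions supplied by Claim \ref{cl:fp}, note that they are fixed points of $F_\varepsilon$ by the restriction property, pass to a convergent subsequence, and use continuity to obtain a fixed point in $K^\varepsilon \cap (X \cap Y)$. The second half, where you invoke Lemma \ref{lem:separation_lemma} to place the limit point in $C$ itself, proves more than the claim asserts; that is precisely the content of the paper's Claim \ref{cl:good_fp}, which the paper reaches by a more roundabout iteration (re-extracting fixed points in $K^{1/k}$ and passing to a second limit), whereas you get it in one pass because your $p_n$ were already chosen in $K^{\varepsilon_n}$. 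Two minor remarks: the inclusion $X_{\varepsilon'} \cap Y_{\varepsilon'} \subseteq (X \cap Y)_\eta$ for small $\varepsilon'$ needs only compactness of $X$ and $Y$ (a sequential compactness argument), not that they are Peano continua; and the parenthetical assertion that the components of $(X \cap Y)_\eta$ are open is not needed --- it suffices that the component containing $C$ is closed, hence contains $p = \lim p_n$, contradicting $p \in C'$.
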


By Claim \ref{cl:fp} and the fact that for $0 < \varepsilon' < \varepsilon$, 
$F_{\varepsilon'}$ is the restriction of $F_\varepsilon$, $F_\varepsilon$ has 
a sequence of fixed points $(p_k)_{k \in \N}$ such that 
$p_k \in N^{\varepsilon/k} \cap M^{\varepsilon/k}$. By passing to a convergent subsequence, 
we get that $p_k \rightarrow p$ as $k \rightarrow \infty$, 
with $p \in K^\varepsilon \cap (X \cap Y)$ 
and $F_\varepsilon(p_k) = F_{\varepsilon/k}(p_k) = p_k$. 
By continuity $F_\varepsilon(p) = p$, which proves the claim. 

\begin{claim} \label{cl:good_fp}
 $F_\varepsilon$ has a fixed point in $C$. 
\end{claim}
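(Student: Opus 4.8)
The plan is to upgrade the proof of Claim~\ref{cl:int_fp}: there, the fixed points $p_k$ of $F_\varepsilon$ were supplied by Claim~\ref{cl:fp}, so we may assume $p_k \in K^{\varepsilon/k}$, and it remains only to show that their limit point lands in $C$ rather than merely in $X\cap Y$. For this I would first record that the neighbourhoods are nested in $\varepsilon$: for $0 < \varepsilon' \le \varepsilon$ we have $N^{\varepsilon'}\subset N^\varepsilon$ (noted already when $F_\varepsilon$ was constructed) and, by the symmetric construction via $\psi$, $M^{\varepsilon'}\subset M^\varepsilon$; hence $N^{\varepsilon'}\cap M^{\varepsilon'}\subset N^\varepsilon\cap M^\varepsilon$. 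Since $K^{\varepsilon'}$ is connected and contains $C$, it is contained in the connected component of $N^\varepsilon\cap M^\varepsilon$ containing $C$, that is, $K^{\varepsilon'}\subset K^\varepsilon$. Thus $(K^{\varepsilon/k})_{k\in\N}$ is a decreasing sequence of continua, each $K^{\varepsilon/k}$ being a connected component of the compact set $N^{\varepsilon/k}\cap M^{\varepsilon/k}$ and hence compact and connected.

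The crux is to identify $K^\infty = \bigcap_{k\in\N} K^{\varepsilon/k}$. A nested intersection of continua is again a continuum, and $C\subset K^\infty$. On the other hand, since $N^{\varepsilon/k}\subset X_{\varepsilon/k}$ and $M^{\varepsilon/k}\subset Y_{\varepsilon/k}$, we get $K^\infty\subset\bigcap_k (X_{\varepsilon/k}\cap Y_{\varepsilon/k}) = X\cap Y$. So $K^\infty$ is a connected subset of $X\cap Y$ containing the connected component $C$, and maximality of $C$ forces $K^\infty = C$.

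It then follows quickly: the points $p_k\in K^{\varepsilon/k}\subset K^\varepsilon$ lie in a compact set, so after passing to a subsequence $p_k\to p$; for each fixed $m$ and all $k\ge m$ we have $p_k\in K^{\varepsilon/k}\subset K^{\varepsilon/m}$, and $K^{\varepsilon/m}$ is closed, so $p\in\bigcap_m K^{\varepsilon/m} = C$. Since each $p_k$ is a fixed point of $F_\varepsilon$ (because $F_{\varepsilon/k}$ is the restriction of $F_\varepsilon$), continuity gives $F_\varepsilon(p)=\lim_k p_k = p$, so $F_\varepsilon$ has a fixed point in $C$. As $C\subset X$ and $F_\varepsilon\vert_X = f$, this $p$ is also a fixed point of $f$ in $C$, which completes the proof of Theorem~\ref{thm:main_thm}.

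The only step that is more than bookkeeping is the topological fact that a decreasing intersection of continua is a continuum; this is classical, with a short proof via a compactness/separation argument, so I expect no genuine difficulty. Everything else is immediate from the nesting of the $N^{\varepsilon'}$ and $M^{\varepsilon'}$ and from Claims~\ref{cl:fp} and~\ref{cl:int_fp}.
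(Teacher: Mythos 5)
Your proposal is correct, and it takes a genuinely different route from the paper for the decisive step. The paper also extracts a convergent sequence of $F_\varepsilon$-fixed points $p_k\in K^{\varepsilon/k}$, but it places the limit $p$ in $C$ by contradiction: if $p$ lay in another component $C'$ of $K^\varepsilon\cap(X\cap Y)$, Lemma~\ref{lem:separation_lemma} (applied to $K^\varepsilon\cap(X\cap Y)$, using $N^{\varepsilon'}\subseteq X_{\varepsilon'}$, $M^{\varepsilon'}\subseteq Y_{\varepsilon'}$) would force $p\notin K^{\varepsilon'}$ for small $\varepsilon'$, contradicting $p\in\bigcap_{\varepsilon'}K^{\varepsilon'}$. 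You instead make the nesting $K^{\varepsilon/k}\subset K^{\varepsilon/m}$ ($k\ge m$) explicit and identify $\bigcap_k K^{\varepsilon/k}$ outright: it is a nested intersection of continua, hence a continuum, it contains $C$, and it sits inside $\bigcap_k\bigl(X_{\varepsilon/k}\cap Y_{\varepsilon/k}\bigr)=X\cap Y$, so maximality of the component $C$ gives $\bigcap_k K^{\varepsilon/k}=C$; the limit fixed point then lands in $C$ with no case analysis. Both arguments rest on the same kind of compactness fact about continua (the paper's separation lemma is itself proved via Hausdorff-metric limits of continua), so they are of comparable depth, but your version is more direct, dispenses with Lemma~\ref{lem:separation_lemma} entirely for this claim (and hence, since that lemma is used nowhere else, for the main theorem), and makes explicit the nesting of the $K^{\varepsilon'}$ that the paper's phrase ``by construction, $p\in K^{\varepsilon'}$ for all $\varepsilon'$'' tacitly relies on. The only external ingredient you invoke, that a decreasing intersection of continua is a continuum, is indeed classical, so there is no gap.
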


By Claim \ref{cl:int_fp} we know that $F_\varepsilon$ 
(and hence $F_{\varepsilon'}$ for $\varepsilon' \in (0,\varepsilon]$) possesses a fixed point 
$p \in K^\varepsilon \cap (X \cap Y)$. If $p \notin C$, then $p$ lies
in another connected component of $K^\varepsilon \cap (X \cap Y)$, say $C'$. 
We can now apply Lemma \ref{lem:separation_lemma} to $K^\varepsilon \cap (X \cap Y)$
and its connected components. Since $N^{\varepsilon'} \subseteq X_{\varepsilon'}$ 
and $M^{\varepsilon'} \subseteq Y_{\varepsilon'}$,  
we obtain that for $\varepsilon' > 0$ sufficiently small, 
$p \notin K^{\varepsilon'}$. By again applying Theorem \ref{thm:jordan_general} 
and Claim \ref{cl:int_fp}, we get that $F_{\varepsilon'}$ has a fixed point 
$p' \in K^{\varepsilon'} \cap (X \cap Y)$, which is also a fixed point for $F_\varepsilon$. 

We can iterate this argument and obtain a sequence of $F_\varepsilon$-fixed points 
$(p_k)_{k \in \N}$ such that $p_k \in K^{1/k} \cap (X \cap Y)$.
As in the proof of the previous claim, by passing to a convergent subsequence,
$p_k \rightarrow p$ as $k \rightarrow \infty$, we get an $F_\varepsilon$-fixed point 
$p \in K^\varepsilon \cap (X \cap Y)$. 
If $p$ lies in a connected component of $K^\varepsilon \cap (X \cap Y)$ distinct from $C$, 
then, by Lemma \ref{lem:separation_lemma}, $p \notin K^{\varepsilon'}$ 
for sufficiently small $\varepsilon'$. But by construction, 
$p \in K^{\varepsilon'}$ for all $\varepsilon' \in (0, \varepsilon]$, so $p \in C$, finishing
the proof of Claim \ref{cl:good_fp}. 

\medskip

Claim \ref{cl:good_fp} implies that $f$ has a fixed point in $C$, 
and Theorem \ref{thm:main_thm} is proved. 
\end{proof}

\begin{proof}[Proof of Corollary \ref{cor:main_cor}]
If $C$ is a connected component of $X \cap Y$ which contains a point $x$ such that
$f^n(x) \in C$ for all $n \in \N$, then exactly as in the proof of Theorem \ref{thm:main_thm}, 
we can construct an extension $F$ of $f$ to a closed neighbourhood $N^\varepsilon$ of $X$, 
for small $\varepsilon > 0$. Using Corollary \ref{cor:jordan_general_rec}
instead of Theorem \ref{thm:jordan_general}, we get that the extension has a fixed point. 
The same argument then shows that one such fixed point lies in $C$, as required. 
\end{proof}

\section{Discussion of Assumptions and Possible Extensions}\label{sec:discussion}

Easy examples show that in our main results we cannot omit the hypothesis of a periodic orbit being completely contained 
in a given connected component of $X \cap f(X)$: 
take $X$ to be a closed $\varepsilon$-neighbourhood of the 
straight line segment $S = [-1,1]\times \{0\}$ and let $f \colon X \to Y$ 
be a homeomorphism which maps $S$ to the semicircle 
$\{(x,y) \colon x^2 + y^2 = 1, y \geq 0 \}$ with $f(-1,0) = (1,0)$ and $f(1,0) = (-1,0)$. 
For $\varepsilon > 0$ small, $f$ has a period-two orbit (spread over two different connected
components of $X \cap f(X)$) but no fixed point. 

Furthermore, obvious counterexamples show that our main result, Corollary \ref{cor:main_cor}, fails 
when the domain $X\subset \R^2$ of the homeomorphism $f$ is not compact. 
Also, we are not aware of generalizations to higher dimensions; in fact, simple counterexamples 
to Theorem \ref{thm:main_thm} in its current form can be constructed, 
when $X \subset \R^n$ is an $n$-dimensional ball with $n \geq 3$.

\medskip

Bonino \cite{Bonino2001} showed that if an orientation-preserving homeomorphism $f\colon \R^2 \to \R^2$ 
possesses non-escaping points in a topological disk $D$ bounded by a simple closed curve $C$, 
and $C$ can be split into two arcs $\alpha = [a,b]_C$, $\beta = [b,a]_C$ 
such that $D \cap f^{-1}(\beta) = \emptyset$ and $f^{-1}(D) \cap \alpha = \emptyset$ 
(Fig.\ref{fig:examples}(a)), then $f$ has a fixed point \textit{in $D$}.
Bonino's theorem is similarly topological, but does not imply (and is not implied by) our results.
\begin{figure}
\centering
\subfloat[][]{\includegraphics[width=0.4\textwidth]{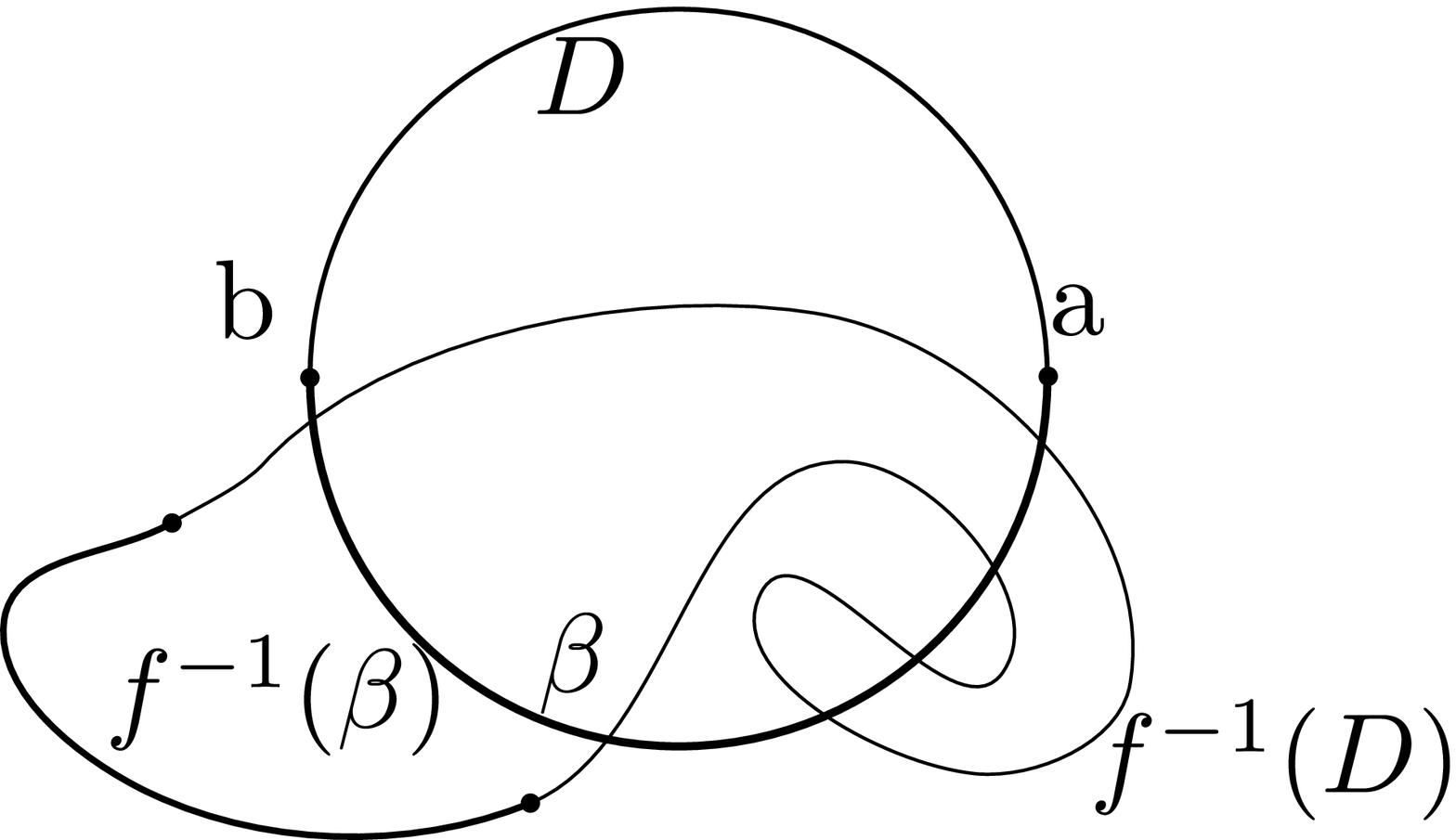}}
\qquad \qquad
\subfloat[][]{\includegraphics[width=0.4\textwidth]{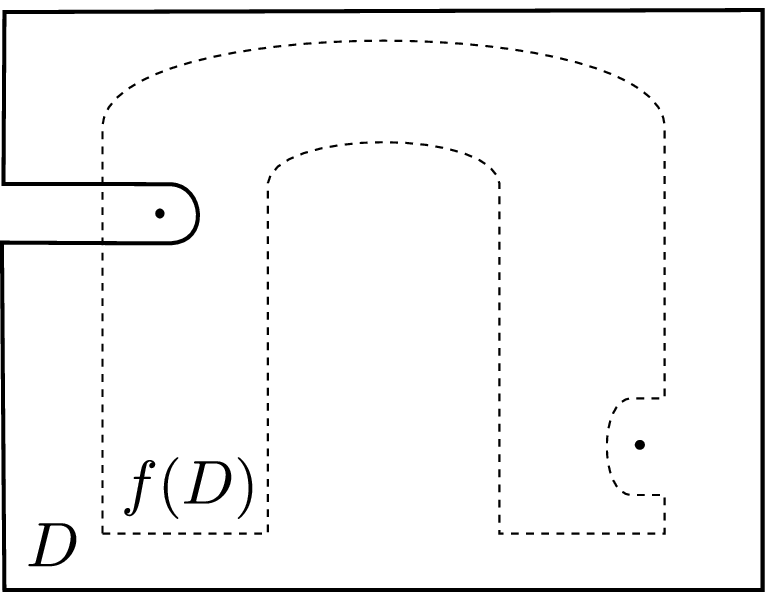}}
\caption{(a) Jordan domain satisfying Bonino's \cite{Bonino2001} condition.\newline
(b) Horseshoe map with neighbourhood of period-two point removed from its domain: The map has period-three points
but no period-two points, showing that a simple generalization of period-forcing results to
non-self maps of planar domains does not work.}
\label{fig:examples}
\end{figure}

Another result related to Theorem \ref{thm:main_thm} is due to Brown \cite[Theorem 5]{Brown1990}. 
Brown assumed that for the $n$-periodic point $x \in X$
there is a connected neighbourhood $W$ of $\{x\}\cup \{f(x)\}$ with $f^i(W) \subset X$ for $i = 1,\ldots,n$, and 
showed that $f$ then has a fixed point in $X$. 

\medskip

A somewhat different class of results relates to the question of period forcing.
Gambaudo et al \cite{Gambaudo1990} showed that a $C^1$ orientation-preserving embedding
of the disk into itself has periodic orbits of all periods, if it has a period-three orbit of 
braid type different than the rotation by angle $2\pi/3$ 
(see also Kolev \cite{Kolev1994} for a topological version of this result).
For $f(X) \nsubset X$, this statement is false: 
a counterexample is a version of the Smale horseshoe map, where a period-two point is removed from the disc
together with a narrow strip such that the remaining domain $X$ is still simply connected (see Fig.\ref{fig:examples}(b)). 
Then $f$, the restriction of the horseshoe map to $X$, 
is an orientation-preserving homeomorphism with $X \cap f(X)$ connected, and
one can choose the removed strip narrow enough such that $f$ still has period-three orbits 
but no period-two orbit in $X$. 
We conclude with the following question:
\begin{question}
Can one find conditions on $X$ and $f(X)$, under which \textbf{every extension} of $f$ 
to a homeomorphism of $\R^2$ has periodic orbits of every period \textbf{passing through} $X$,
whenever $f$ has a period-three point (of non-trivial braid type) in $X$?
\end{question}
The method of extending a non-self map of $X \subset \R^2$ to a self-map of $\R^2$ 
(or $\D$) without adding fixed (or periodic) points
could possibly also be applied to this problem.

\section*{Appendix. Proofs of Technical Lemmas}

\begin{proof}[Proof of Lemma \ref{lem:part}]
 We apply Proposition \ref{prop:jordan} with $U = D$ and $U' = E$.
The set $D \cap E$ is a Jordan domain and by (1), 
its boundary curve consists of the (pairwise disjoint) sets
\begin{itemize}
 \item $\partial D \cap \partial E$ (isolated points or closed arcs), 
\item $\alpha_i \subset \partial D$, $i \in I$ (open arcs), and
\item $\beta_j \subset \partial E$, $j \in J$ (open arcs).
\end{itemize}
It also follows from Proposition \ref{prop:jordan}, that $\overline{E \setminus D}$ is the 
union of Jordan domains $A_i$ bounded by $\alpha_i \cup [a_i,b_i]_{\partial E}$
and $\overline{D \setminus E}$ is the union of Jordan domains $B_j$ 
bounded by $\beta_j \cup [c_j,d_j]_{\partial D}$ (see Fig.\ref{fig:prop_jordan}).

By Schoenflies' theorem, each Jordan domain $A_i$
is homeomorphic to the closed unit disk $\overline \D = \{(x,y) \in \R^2 \colon x^2+y^2 \leq 1 \}$.
Let $h_i \colon A_i \to \overline \D$ be such 
homeomorphism. We can assume $h_i(a_i) = (-1,0)$ and $h_i(b_i) = (1,0)$. 
Then the partition of $\overline \D$ into vertical line segments 
$l_s \coloneqq \{(x,y) \in \overline \D \colon x = s \}$, $s \in (-1,1)$, 
gives rise to a partition $\{h_i^{-1}(l_s) \colon s \in (-1,1)\}$ of $A_i$ into closed arcs, 
each connecting a point of $\alpha_i$ to a point of $(a_i,b_i)_{\partial E}$. 
Similarly one can obtain a partition of $B_j$ into closed arcs 
connecting points on $\beta_j$ to points on $(c_j,d_j)_{\partial D}$. 

Note that $D \cup E$ is also a Jordan domain and that the 
$(a_i,b_i)_{\partial E}$, $i \in I$, and $(c_j,d_j)_{\partial D}$, $j \in J$,
together with $\partial D \cap \partial E$ form a partition of its boundary. 
Thus we obtained a collection of (pairwise disjoint) closed arcs, 
each connecting precisely one point of $\partial (D \cap E)$ to precisely one point of $\partial (D \cup E)$. 
Denote the arc corresponding to $z \in \partial (D \cup E)$ by $l^z$, 
and let $l^z = \{z\}$ whenever $z \in \partial D \cap \partial E$. 

Since $D \cup E$ is a Jordan domain, we can again apply Schoenflies' theorem to obtain a homeomorphism
$h \colon \R^2 \setminus \inter(D \cup E) \to \R^2 \setminus \D$. 
Let $r_\theta$ be the radial line segment in $\R^2 \setminus \D$ 
expressed in polar coordinates as $\{(r,\phi) \colon r \geq 1, \phi = \theta \}$. 
Then $\{r_\theta \colon \theta \in [0, 2\pi) \}$ forms a partition of $\R^2 \setminus \D$, 
which can be pulled back to a partition $\{h^{-1}(r_\theta) \colon\theta \in [0, 2\pi) \}$ of $\R^2 \setminus \inter(D \cup E)$. 
This partition consists of (pairwise disjoint) arcs $m^z$, each connecting a point on $z \in \partial (D \cup E)$ to $\infty$. 

Combining the above, we get that each point $z \in \partial (D \cup E)$ is the endpoint of 
two uniquely defined arcs $l^z$ and $m^z$ ($l^z$ possibly being the trivial arc $\{z\}$).
Let $L^z \coloneqq l^z \cup m^z$, which is an arc connecting a point on $\partial (D \cap E)$ to $\infty$, 
for every $z \in \partial (D \cup E)$. 
Then $\{L^z \colon z \in \partial (D \cup E) \}$ is a partition 
of $\R^2 \setminus \inter(D \cap E)$ with the desired properties.

At last, we construct a continuous function 
$\lambda^D \colon \R^2 \setminus \inter(D) \to \R_{\geq 0}$ strictly monotonically
increasing along each arc $L^z$ and such that $\lambda^D\vert_{\partial D} \equiv 0$; 
the function $\lambda^E \colon \R^2 \setminus \inter(D) \to \R_{\geq 0}$ can be constructed similarly. 

For $x \in \R^2 \setminus \inter(D)$, let $z$ be the unique point in $\partial (D \cup E)$ such that 
$x \in L^z$ and let $\{p\} = L^z \cap \partial D$. If $p \notin E$, then $p = z$ and the arc 
$[p,x] \subset L^z$ is contained in $\R^2 \setminus \inter(D \cup E)$. Then $h([p,x])$ is a straight line from
$h(p)$ to $h(x)$ and we set $\lambda^D(x) = \ell( h([p,x]) )$, where $\ell$ denotes the usual (Euclidean) arc length on arcs 
in $\R^2$. If $p \in E$ and $x \in E$, $[p,x] \subset L^z$ lies in a component $A_i$ of $\overline{E \setminus D}$, and 
we set $\lambda^D(x) = \ell( h_i([p,x]) )$. 
Finally, if $p \in E$ and $x \notin E$, $[p,x]$ is the union of the arcs $[p,z] \subset L^z$ and $[z,x] \subset L^z$, 
and we set $\lambda^D(x) = \ell( h_i([p,z]) ) + \ell( h([z,x])) $. 
\end{proof}

\begin{proof}[Proof of Lemma \ref{lem:separation_lemma}]
Suppose for a contradiction that $C$ and $C'$ lie in the same component $C_k$ of $X_{1/k}$ for all $k \in \N$. 
We use the fact that the space of subcontinua (non-empty, compact, connected subsets) of a compact set in the plane 
equipped with the Hausdorff metric is a compact complete metric space.
Then, by passing to a subsequence which we also call $C_k$, we have $C_k \to C_0$ as $k \to \infty$, 
where $C_0$ is itself non-empty, compact and connected. It is easy to see that convergence in the Hausdorff
metric implies that $C \cup C' \subseteq C_0 \subseteq X$, which contradicts the assumption that $C$ and $C'$ 
are distinct connected components of $X$.
\end{proof}

\section*{Acknowledgement}
I would like to thank my PhD advisor, Professor Sebastian van Strien, 
for the many invaluable discussions that led me to the results presented in this paper, 
and Dr.~Robbert Fokkink for suggesting to consider a generalization of the result from 
Jordan domains to more general planar sets. I also thank the anonymous referee for his 
careful reading of the manuscript and helpful suggestions improving the exposition of the paper.

\bibliographystyle{abbrv}
\bibliography{../BibTex/library}

\end{document}